\newtheoremstyle{myremark} 
    {7pt}                    
    {7pt}                    
    {}  	                 
    {}                           
    {\bf}       	         
    {.}                          
    {.5em}                       
    {}  
\theoremstyle{plain}
\newtheorem{lemma}{Lemma}[section]
\newtheorem{corollary}[lemma]{Corollary}
\newtheorem{theorem}[lemma]{Theorem}
\theoremstyle{definition}
\newtheorem{conjecture}[lemma]{Conjecture}
\newtheorem{definition}[lemma]{Definition}
\newtheorem{question}[lemma]{Question}
\theoremstyle{myremark}
\newtheorem{remark}[lemma]{Remark}
\theoremstyle{definition}
\newcommand{\N}{\ensuremath{\mathbb{N}}}
\newcommand{\R}{\ensuremath{\mathbb{R}}}
\title{Lions and contamination, triangular grids, and Cheeger constants}
\author{Henry Adams}
\email{henry.adams@colostate.edu}
\author{Leah Gibson}
\email{leah.gibson@rams.colostate.edu }
\author{Jack Pfaffinger}
\email{jack.pfaffinger@colostate.edu}
\begin{document}

\maketitle

\begin{abstract}
Suppose each vertex of a graph is originally occupied by contamination, except for those vertices occupied by lions.
As the lions wander on the graph, they clear the contamination from each vertex they visit.
However, the contamination simultaneously spreads to any adjacent vertex not occupied by a lion.
How many lions are required in order to clear the graph of contamination?
We give a lower bound on the number of lions needed in terms of the Cheeger constant of the graph.
Furthermore, the lion and contamination problem has been studied in detail on square grid graphs by Brass et al.\ and Berger et al., and we extend this analysis to the setting of triangular grid graphs.
\end{abstract}

\section{Introduction}
In the ``lions and contamination'' pursuit-evasion problem~\cite{BGGK,brass2007escaping,dumitrescu2008offline}, lions are tasked with clearing a square grid graph consisting of vertices and edges.
At the start of the problem, all vertices occupied by lions are considered cleared of contamination, and the rest of the vertices are contaminated.
In each time step, the lions move along the edges of the grid, and each new vertex they occupy becomes cleared.
However, the contamination can also travel along the edges of the grid not blocked by a lion and re-contaminate previously cleared vertices.
How many lions are needed to clear the grid, independent of the starting position of the lions?

Certainly $n$ lions can clear an $n\times n$ grid graph by sweeping from one side to the other.
One might conjecture that $n$ lions are required to clear an $n\times n$ grid graph.
However, in general it is not yet known whether $n-1$ lions suffice or not.
As a lower bound, the paper~\cite{brass2007escaping} proves that at least $\lfloor \frac{n}{2} \rfloor + 1$ lions are required to clear an $n\times n$ grid graph.
The details of the discretization certainly matter, in the following sense.
For a $n\times n\times n$ grid graph, one might expect that $n^2$ lions are required, but~\cite{BGGK} shows that when $n=3$, only $8=n^2-1$ lions suffice to clear a $3\times 3\times 3$ grid.

We consider the case of planar triangular grid graphs, under various models of lion motion.
Given a strip discretized by a triangular grid graph, $n$ lions can clear a strip of height $n$ when they are allowed to move one-at-a-time (this is allowed in the standard model of lion motion).
However, we conjecture that $n$ lions do not suffice when all lions must move simultaneously.
In the setting of simultaneous motion (which we refer to as ``caffeinated lions,'' as the lions never take a break), we show that $\lfloor \frac{3n}{2} \rfloor$ caffeinated lions suffice to clear a strip of height $n$.
Furthermore, via a comparison with~\cite{BGGK,brass2007escaping}, we show that $\lfloor \frac{n}{2} \rfloor$ lions are insufficient to clear a triangulated rhombus, in which each side of the rhombus has length $n$.
Lastly, for an equilateral triangle discretized into smaller triangles, with $n$ vertices per side, we conjecture that $\frac{n}{2\sqrt{2}}$ lions are not sufficient to clear the graph.

Furthermore, in the setting of an arbitrary graph $G$, we give a lower bound on the number of lions needed to clear the graph in terms of the \emph{Cheeger constant} of the graph.
The Cheeger constant, roughly speaking, is a measure of how hard it is to disconnect the graph into two pieces of approximately equal size by cutting edges~\cite{Chung_1997}.
The use of the Cheeger constant in graph theory is inspired by its successful applications in Riemannian geometry~\cite{cheeger1969lower}.
Our bound on the number of lions in terms of a graph's Cheeger constant is quite general (it holds for any graph), and therefore we do not expect it to be sharp for any particular graph.

We give background definitions and notation in Section~\ref{sec:background}, study triangular grid graphs in Section~\ref{sec:triangular}, and explain the connection to the Cheeger constant in Section~\ref{sec:cheeger}.
We ask many open questions in Section~\ref{sec:conclusion} that we hope will inspire future work.

\section{Related work}

The lion and contamination problem considered in this paper is only one of many pursuit-evasion problems that occur in graphs.
See~\cite{parsons1978pursuit} for an early treatment of an evasion problem on graphs, and see~\cite{fomin2008annotated} for a bibliography of related problems and papers.
The graph-clear problem explored by~\cite{kolling_carpin_2007} is a model for surveillance tasks and can be used to help determine the number of robots needed to patrol a large enclosed area.
In this problem, the contamination that must be cleared lives on the edges of the graph, whereas in the lion and contamination problem we consider the contamination lives on the vertices.
Pursuit-evasion problems have applications to air traffic control~\cite{bacsar1998dynamic}, robot motion planning~\cite{latombe2012robot,lavalle1998optimal}, defense~\cite{isaacs1999differential}, collision avoidance~\cite{fox1997dynamic}, and tracking~\cite{hajek2008pursuit}.

Perhaps the most developed graph-based pursuit-evasion challenge is the problem of Cops and Robbers.
This is a particular form of graph evasion problem which involves a set of cops moving along the vertices of a graph that attempt to capture a single robber also moving on the vertices of a graph.
In this version the cops and robbers alternate moving, rather than both moving at the same time.
The robber is caught if a cop is able to move onto the vertex the robber is occupying.
For a vertex $v$, define $N[v]$ to be the set containing $v$ and all vertices adjacent to $v$.
A vertex $v$ is called a $\it{corner}$ if there exists a vertex $u$ such that $N[v] \subseteq N[u]$.
A single cop is able to catch the robber on a finite graph if and only if some sequence of deleting corners results in a single vertex remaining.
This condition is known as the graph being \textit{dismantlable}.
For more information see~\cite{Bonato_Nowakowski_2011}.

There are many variants of pursuit-evasion problems that take place in Euclidean space instead of in graphs; see~\cite{chung2011search} for a survey.
The most famous such problem may be the lion and man problem proposed by Rado in 1925~\cite{littlewood1986littlewood}, in which a single man is chased by a single lion of the same speed.
The lion and man problem has been studied both with continuous time and with discrete time~\cite{alonso1992lion,sgall2001solution}.
In many pursuit-evasion problems, there are numerous sensors or intruders, for example as in~\cite{liu2005mobility,Liu2013dynamic,chin2009detection}.
Kalman filters can be used for efficient target tracking~\cite{La2009flocking, Olfati-Saber2006flocking, Su2016distributed}, and neural networks can obtain dynamic coverage while learning previously unknown domains~\cite{Qu2014finite}.

A related class of evasion problems take place in mobile sensor networks, as studied from the topological perspective in~\cite{EvasionPaths,Coordinate-free,de2007coverage}.
Here both space and time are continuous, and ball-shaped sensors wander in a bounded domain.
The perspective taken is that of minimal sensing, in that the sensors do not know their locations but instead only measure connectivity information.
Furthermore, the motion of the sensors is arbitrary --- perhaps the sensors are floating in the ocean or blown in the air by wind.
There is no speed bound for either the sensors or the intruders.
One would like to detect when the sensors have necessarily captured all possible intruders, but without using location information.
A slightly different perspective is taken in~\cite{adams2021efficient}, namely, what random models of sensor motions lead to faster mobile coverage?
The relation to the lion and contamination problem in this paper can be partially viewed as a passage from continuous to discrete.
If one approximates a continuous plane instead by a square or triangular grid graph, then the number of lions needed to clear that graph provides a discrete approximate analogue of the number of sensors that might be needed in a continuous model on the plane.

In contrast with the aforementioned models, in the lion and contamination pursuit evasion problem that we will consider (defined rigorously in the following section), both space and time are discrete, and the pursuers and evaders move simultaneously with bounded speed.

\section{Notation and definitions}
\label{sec:background}

We begin by providing notation and definitions for various models of lion motion, for how the contamination spreads, and for various types of triangular grid graphs.
For additional background information, we refer the reader to~\cite{BGGK,brass2007escaping}.

\subsection{Graphs}

In this paper we consider only finite simple graphs.
A finite simple graph $G=(V,E)$ contains a finite set of vertices $V$.
The set of (undirected) edges $E$ is a collection of 2-subsets from $V$.
We may also use the notation $V(G)$ or $E(G)$ to denote the vertex set of $G$ and edge set of $G$, respectively.
If $\{u,v\}$ is an edge in $E$, then we say that the vertices $u$ and $v$ are \emph{adjacent}, and we often denote this edge as $uv$.
Visually, we represent adjacency by drawing a line segment between the vertices $u$ and $v$.

An important concept for this paper will be the boundary of a set of vertices in a graph.

\begin{definition}
Let $G$ be a graph with vertex set $V$.
We define the \emph{boundary} of a vertex subset $S\subseteq V$, denoted $\partial S$, to be the collection of all vertices in $S$ that share an edge with some vertex of $V\setminus S$.
That is,
\[\partial S = \{v\in S~|~uv\in E(G) \text{ for some } u\notin S\}.\]
\end{definition}

\begin{definition}
A \emph{walk} $\pi$ in a graph $G$ is an ordered list $(\pi(1), \pi(2), \dots, )$ of vertices where each $\pi(t) \in V(G)$, and $\pi(t)$ is adjacent to $\pi(t+1)$ for all $t$.
\end{definition}
We note that a walk is allowed to retrace its steps --- i.e., the vertices visited need not be distinct.

\subsection{Lion Motion}

Each lion occupies a vertex of the graph.
In this evasion problem, time is discrete.
At each turn, a lion can either stay where it is, or move across an edge to an adjacent vertex.
Multiple lions are allowed to occupy the same vertex.

The main restrictions on lion motion that we consider are caffeinated lions and polite lions.
In the caffeinated model, all lions must move at every time step, and in the polite model, at most one lion can move at once.

\begin{definition}[Caffeinated Lions]
In the caffeinated model of lion motion, every lion must move at each turn.
In other words, between turns no lion is allowed to remain in place at its current location.
\end{definition}

\begin{definition}[Polite Lions]
In the polite model of lion motion, at most one lion is allowed to move at each turn.
All other lions must remain at their current vertex.
\end{definition}

When we simply say ``lions'' without specifying, we mean lions that are neither caffeinated nor polite --- any lion can move or stay put at any turn.

\subsection{Contamination motion}

Every vertex that is not originally occupied by a lion is \emph{contaminated}.
When a lion moves to a new vertex, that new vertex becomes \emph{cleared} of contamination.
A cleared vertex $v$ at time step $t$ becomes recontaminated at the next time step $t+1$ if 
\begin{itemize}
    \item it is not occupied by a lion at time step $t+1$, and
    \item if it is adjacent to a contaminated vertex $u$ at time $t$, and there is no lion that crosses the edge from $v$ to $u$ between times $t$ and $t+1$ (which would block the contamination from moving across this edge).
    Note that in the same time step that a lion leaves a vertex, it can become recontaminated.
\end{itemize}

We let $C(t)$ denote the set of cleared vertices at time $t$.
We say that the lions have \emph{cleared} or \emph{swept} the graph $G$ if at any time $t$, all of the vertices in the graph are cleared of contamination.

\begin{definition}[Sweep]
A sweep of a graph is the movement of the lions that results in the graph being completely cleared of contamination at time $t$, i.e.\ $C(t) = V(G)$.
\end{definition}
We are interested in finding the fewest number of lions required to clear a graph.

We note that if $G$ is a connected graph, and if $k$ lions can sweep $G$ from a certain starting position, then $k$ lions can sweep $G$ from \emph{any} starting position.
However, this is not necessarily the case if the lions are caffeinated---unless $G$ contains an odd cycle (see Section~\ref{sssec:starting}).

\subsection{Triangular grid graphs}

We begin by bounding the number of lions needed to clear triangular grid graphs, which we define now.

Let $T_n$ be the $n$th triangular number for $n \in \N$.
That is, $T_n = \frac{n(n+1)}{2}$.

\begin{definition}
[Triangular Grid]
Let $P_n$ be the planar graph which forms an equilateral triangle of side-length $n-1$ (with $n$ vertices on each side), subdivided into a grid of equilateral triangles as drawn in Figure~\ref{fig:P_5}.
\end{definition}

\begin{figure}
    \includegraphics[width = .3\textwidth]{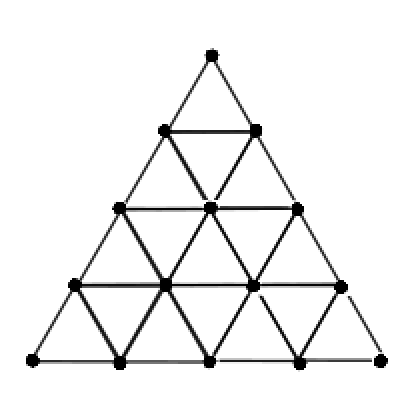}
    \caption{The triangular grid graph $P_5$.}
    \label{fig:P_5}
\end{figure}

Note that $P_n$ has $\frac{n(n+1)}{2} = T_n$ vertices and $\frac{3n(n-1)}{2}$ edges.
\footnote{Proof: The number of vertices are the $n$th triangular number defined by $\sum_{i = 0}^{n} i = \frac{n(n+1)}{2}$, since $T_i$ is constructed by taking $T_{i-1}$ and adding $i$ vertices.
Denote the number of edges in $T_n$ as $E_n$.
These numbers are determined by the recurrence relation $E_n = E_{n-1} + 3(n-1)$.
To see this, suppose that $T_{n-1}$ is drawn and place $n$ vertices below it.
Each of the $n-1$ vertices in row $n-1$ will have two edges drawn between itself and the vertices in the last row of $n$ vertices.
The last row of $n$ vertices will be connected together by another $n-1$ edges.
So the equivalence relation is established, and it satisfies $E_n = \frac{3n(n-1)}{2}$ for $n \geq 1$.}

\begin{definition}[Triangular Lattice]
Let $R_{n,l}$ be the planar graph which forms a parallelogram of height $n$ vertices and length $l$ vertices, subdivided into a grid of equilateral triangles as drawn below.
\end{definition}

\begin{figure}
    \centering
    \includegraphics[width = .5\textwidth]{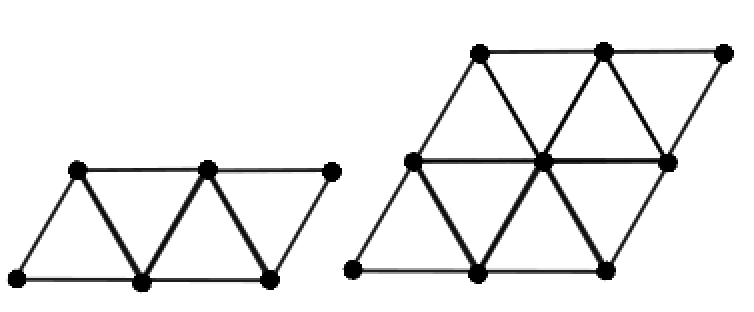}
    \caption{Example of $R_{2,3}$ and $R_{3,3}$ (from left to right).}
    \label{fig:cut_ribbon}
\end{figure}

\section{Lions and contamination on triangular grid graphs}
\label{sec:triangular}

We study the number of lions needed to clear triangular grid graphs, for various shapes of graph domain, and under various different models of lion motion.

\subsection{Sufficiency of $n$
lions on a triangulated strip}
\label{ssec:n-suffic}

\begin{theorem}
Let $n$ and $l$ be positive integers.
Then $n$ lions suffice to clear the grid $R_{n,l}$ of contamination.
\end{theorem}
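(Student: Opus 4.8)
The plan is to exhibit an explicit strategy for $n$ lions that sweeps $R_{n,l}$ column by column, exploiting the fact that in the unrestricted (neither caffeinated nor polite) model, lions may move one at a time while all others hold their positions. Index the vertices of $R_{n,l}$ by $(i,j)$ with $i \in \{1,\dots,n\}$ the row (height) and $j \in \{1,\dots,l\}$ the column, where the triangulation adds, say, the diagonal edges $\{(i,j),(i-1,j+1)\}$ in addition to the obvious horizontal and vertical edges of the parallelogram. The key structural observation is that a full column $\{(1,j),\dots,(n,j)\}$ together with a single extra ``guard'' vertex is enough to separate the already-cleared left part of the lattice from the contaminated right part: contamination from column $j+1$ can only reach a cleared column-$j$ vertex through the horizontal edge $\{(i,j),(i,j+1)\}$ or the diagonal edge into $(i,j)$, and a lion sitting on $(i,j)$ blocks both the recontamination of $(i,j)$ itself (it is occupied) and, when it is the one that moves across, the relevant edge.

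First I would set up the invariant: at the start of phase $j$, lions occupy all $n$ vertices of column $j$, and every vertex in columns $1,\dots,j$ is cleared while columns $j+1,\dots,l$ are contaminated. The base case $j=1$ is the chosen starting position (and by the remark in the excerpt, since the graph is connected, sweeping from some starting position suffices). For the inductive step I would advance the ``wall'' of lions from column $j$ to column $j+1$ one lion at a time, moving them in an order — top-to-bottom or bottom-to-top depending on the orientation of the diagonal edges — so that each lion, when it steps from $(i,j)$ to $(i,j+1)$, crosses the very edge through which contamination could otherwise flow back, and so that at every intermediate configuration the union of the lions' positions (a mix of column-$j$ and column-$(j+1)$ vertices) still forms a vertex cut isolating the cleared region. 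The second step is to check that no already-cleared vertex in columns $\le j$ gets recontaminated during these $n$ single moves: a vacated vertex $(i,j)$ has, among its neighbors not in the cleared region, only vertices of column $j+1$, and those are cleared in the correct order before $(i,j)$ is exposed to them, or the blocking-edge condition applies.

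The main obstacle I expect is the bookkeeping around the diagonal edges: the triangular lattice has a ``slant,'' so a vertex $(i,j)$ may be adjacent to $(i\pm 1, j+1)$ as well as $(i,j+1)$, which means a single lion leaving $(i,j)$ can be exposed to contamination coming diagonally from a column-$(j+1)$ vertex that has not yet been cleared. Resolving this is exactly why the order of the $n$ moves matters, and possibly why one genuinely needs all $n$ lions rather than fewer: the lion must move into the diagonal neighbor first, or the sweep direction must be chosen so that the diagonal always points ``backward'' into an already-cleared vertex. I would handle this by fixing the convention for the diagonals in $R_{n,l}$ explicitly and then moving the lions in the direction that makes each diagonal edge from a freshly-vacated vertex point to a vertex already occupied or already cleared; with that convention the verification becomes a short case check on the neighbors of $(i,j)$ and $(i,j+1)$. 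Once the invariant is re-established with lions on column $j+1$, induction on $j$ up to $l$ finishes the proof, since after phase $l-1$ all columns are cleared, i.e.\ $C(t) = V(R_{n,l})$.
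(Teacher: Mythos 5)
Your proposal is correct and follows essentially the same strategy as the paper: line the $n$ lions up on one column, then advance them one at a time (exploiting the unrestricted motion model) in an order chosen so that the diagonal edges never let contamination flow back into the cleared region, and induct column by column. The paper's own proof is no more detailed than yours on the key case check about the diagonals, so nothing is missing relative to the published argument.
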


\begin{proof}
There is a specific sweeping formation that we use for the $n$ lions to clear the grid of contamination, but since there is no restriction on lion movement and the lions live on a connected graph, all lions can move into this clearing position without issue.

\begin{figure}
    \centering
    \includegraphics[width=.8\textwidth]{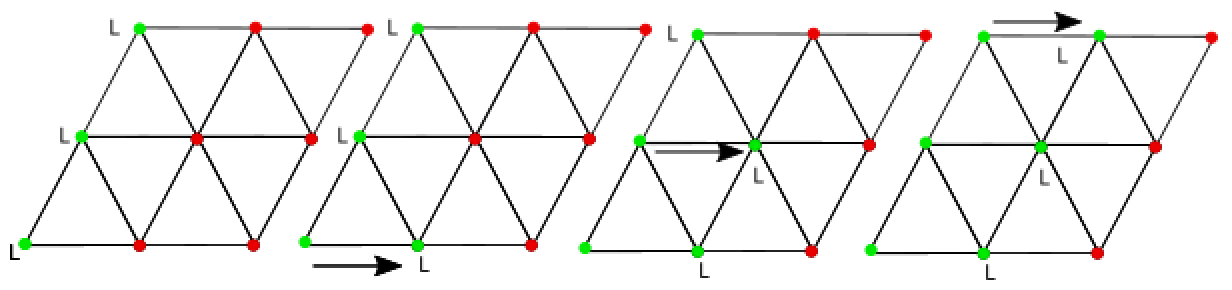}
    \caption{Sweeping formation on $R_{3,3}$.
    The red vertices are contaminated, the green vertices are cleared, and the locations of lions are marked with `L'.}
    \label{fig:R3_ex}
\end{figure}

In the graph $R_{n,l}$ we let the bottom row be labeled row 1, the next row up is row 2, and this continues to row $n$.
We can apply a sweeping method using $n$ lions which move until they are positioned on the leftmost vertices of the grid, i.e.\ the leftmost diagonal column.
We complete the sweep in the following manner as illustrated in Figure~\ref{fig:R3_ex}:
Let the lion on row 1 travel one step to the right.
The vertex it previously occupied is cleared and protected from future contamination in the next step.
Next, let the lion in row 2 travel one step to the right.
The vertex it previously occupied is cleared and protected from future contamination in the next step.
Now, we let the lion in row 3 move one step to the right.
Similarly, the previously occupied vertex is cleared and protected.
We repeat this for $n$ lions.
Once each lion has moved one step to the right, the previous column is cleared and we repeat the process again starting with the lion on row 1.
We continue this sweep until the entire length of the graph $R_{n,l}$ is cleared.
\end{proof}

We note that this clearing sequence is obtainable with polite lions, but not with caffeinated lions, as demonstrated in Figure~\ref{fig:caff_fail}.

\begin{figure}
    \centering
    \includegraphics[width = .5\textwidth]{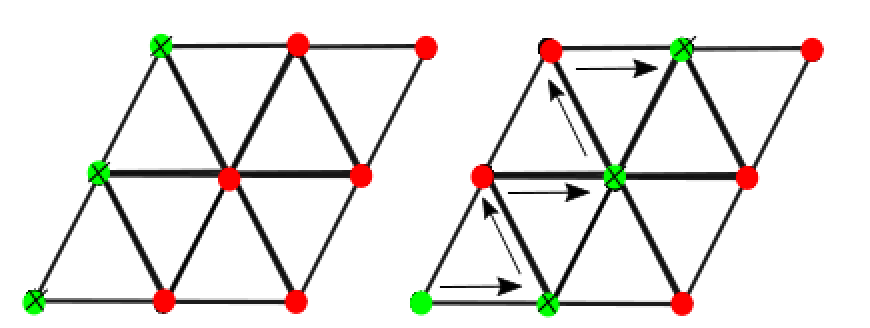}
    \caption{Under caffeinated lion motion, $n$ caffeinated lions cannot sweep $R_{n,l}$ by moving from left to right, as contamination can travel along the diagonal edges and recontaminate previously cleared vertices.
    The red vertices are contaminated, the green vertices are cleared, and the locations of caffeinated lions are marked with `X'.}
    \label{fig:caff_fail}
\end{figure}

\subsection{Sufficiency of $\lfloor\frac{3n}{2}\rfloor$ caffeinated
lions on a triangulated strip}

We now restrict lion movement so that the lions are caffeinated, i.e.\ each lion must move at every step.
We show that $\lfloor\frac{3n}{2}\rfloor$ caffeinated lions suffice to clear the triangular lattice $R_{n,l}$.
Indeed, we provide a set formation where the lions may start in order to clear the grid using only $\lfloor\frac{3n}{2}\rfloor$ caffeinated lions.
In order to prove that $\lfloor\frac{3n}{2}\rfloor$ \emph{caffeinated} lions are sufficient independent of their starting positions, we must also show that the caffeinated lions can move to this initial formation from any starting position.

\subsubsection{Sweeping formation for $\lfloor\frac{3n}{2}\rfloor$ caffeinated lions}
\label{ssec:sweeping-formation}

Consider the graph $R_{n,l}$.
We let the $\lfloor\frac{3n}{2}\rfloor$ lions line vertically such that the lions create a ``wall of triangles'' in which each row alternates having one or two lions, starting with one lion in the top row.
These lions form a vertical wall that stretches from the bottom to top of the grid; see to Figure~\ref{fig:caff_formation}(left) for this formation.
We will show in Lemma \ref{lem:starting} that given any initial starting position, the caffeinated lions can travel into this initial sweeping formation.
Once in this formation, the caffeinated lions can sweep first to the left and then to the right to clear the contamination.
When the lions reach a corner of the grid, the lions sharing horizontal edges with the still contaminated vertices will continue their horizontal sweep.
The remaining lions rotate in a three cycle (or swap positions with a lion on an adjacent vertex) to remain caffeinated while the other lions sweep.
Refer to Figure~\ref{fig:caff_formation} for an example.

\begin{figure}
    \centering
    \includegraphics[width = 1\textwidth]{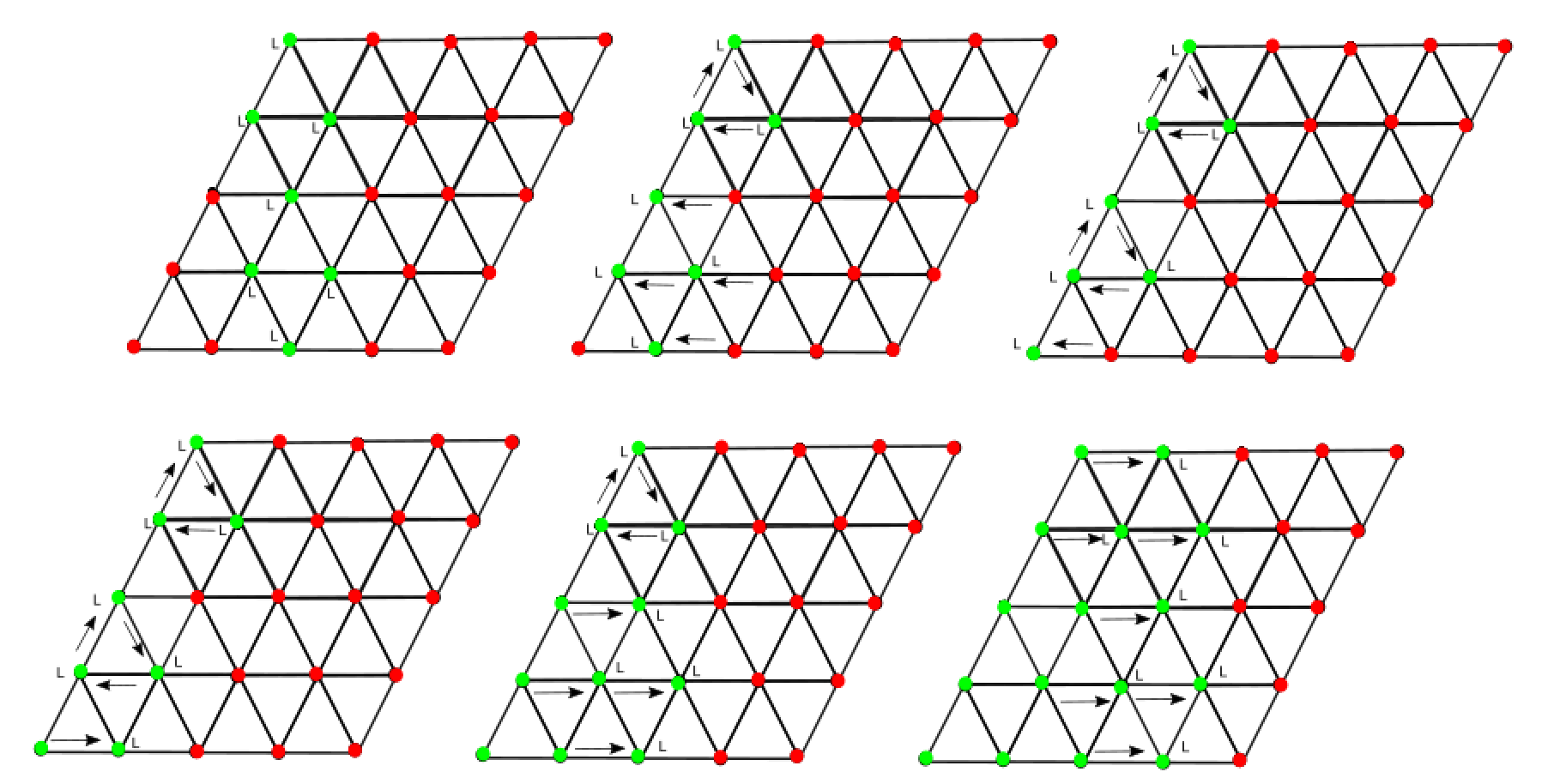}
    \caption{The sweeping procedure for caffeinated lions using a wall of triangles.}
    \label{fig:caff_formation}
\end{figure}

\subsubsection{Can lions get to some predetermined starting position?}
\label{sssec:starting}

We will use the following lemma to show that caffeinated moving lions may move to arbitrarily specified positions in the $R_{n,l}$ graph, no matter where their initial starting positions are.
This allows the caffeinated lions to move to the initial sweeping formation shown in Section~\ref{ssec:sweeping-formation}, regardless of their starting positions.

\begin{lemma}\label{lem:starting}
Consider any two vertices $u,v$ in the finite connected graph $R_{n,l}$.
Let $M$ be the length of the shortest path between $u$ and $v$.
Then for any $m\ge M$, there is a walk from $u$ to $v$ of length exactly $m$.
\end{lemma}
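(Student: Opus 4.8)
The plan is to exploit the fact that $R_{n,l}$ contains triangles (odd cycles), so parity is never an obstruction. First I would fix a shortest path $u = w_0, w_1, \dots, w_M = v$ of length $M$, and observe that for any $m \geq M$ we can try to ``pad'' this path with extra steps that cancel out. The cleanest padding move is: pick any vertex $w$ on the path that is incident to at least one edge $ww'$, and replace a single visit to $w$ with the detour $w, w', w$, which adds exactly $2$ to the length. Iterating this shows that there is a walk from $u$ to $v$ of length $m$ whenever $m \geq M$ and $m \equiv M \pmod 2$.

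Next I would handle the opposite parity, $m \equiv M + 1 \pmod 2$, which is where the triangular structure is essential. Since $R_{n,l}$ (for $n \geq 2$, or really in any case where it has an edge forming a triangle) contains a $3$-cycle, pick such a triangle $a, b, c, a$. I would route from $u$ to $v$ by first walking to a vertex of this triangle, going around it once (which adds $3$, an odd number, to the length), walking back, and then padding with $+2$ moves as before. Concretely: if $M'$ denotes the length of the shortest walk from $u$ to $v$ that traverses at least one triangle, then $M' \le M + (\text{const})$, the walk has the opposite parity from $M$ after one triangle traversal, and we again pad by $2$'s to reach any larger $m$ of that parity. One must check that $M'$ is small enough that every $m$ of the ``wrong'' parity with $m \ge M$ is actually reachable --- this requires a tiny bit of care, but since $R_{n,l}$ is connected and every vertex is within a bounded distance of a triangle, and in fact every edge of $R_{n,l}$ lies on a triangle, one can arrange the triangle detour to happen \emph{right at $u$} (or at $w_1$), costing only $+3$ over the shortest path; so every $m \ge M+3$ of either parity is reachable, and the three remaining small cases $m = M, M+1, M+2$ are checked directly (for $m = M+1$ one needs that some edge out of the path lies on a triangle, which holds).

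Actually, the slickest formulation I would use: it suffices to show that from any vertex $u$ there is a closed walk of length $2$ (go to a neighbor and back) and a closed walk of length $3$ (around a triangle through $u$ — valid since every vertex of $R_{n,l}$ lies on a triangle). Given these, from $u$ we can realize a closed walk of \emph{every} length $\ell \ge 2$ by combining copies of the length-$2$ and length-$3$ closed walks (every integer $\ge 2$ is a nonnegative combination of $2$ and $3$). Then for $m \ge M$: take the shortest path of length $M$, and if $m = M$ we are done, while if $m > M$ we have $m - M \ge 1$; if $m - M \ge 2$ prepend a closed walk at $u$ of length $m - M$, and if $m - M = 1$ we instead note $m = M+1 \ge 2$ so (since $n,l \ge 1$ forces... hmm) — to avoid the edge case I would just assume WLOG $M \ge 1$ (if $u = v$ then $M = 0$ and closed walks of every length $\ge 2$ plus the trivial length-$0$ and length... the length-$1$ case $u=v$, $m=1$ is genuinely impossible, but the hypothesis $m \ge M = 0$ with $m=1$... in a simple graph there is no closed walk of length $1$, so strictly the lemma needs $R_{n,l}$ nontrivial and a minor caveat; I expect the intended reading has $u \ne v$ or $m$ large enough, and I would flag this).

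The main obstacle is the parity issue, and it is fully resolved by the presence of triangles: the key structural input is simply that every vertex (indeed every edge) of $R_{n,l}$ lies on a $3$-cycle, giving closed walks of coprime lengths $2$ and $3$ at every vertex, after which the Chicken McNugget / numerical-semigroup observation that every integer $\ge 2$ is expressible as $2a + 3b$ finishes it. I would present the triangle-existence claim as an immediate consequence of the definition of $R_{n,l}$ as a grid of equilateral triangles (for $n \ge 2$; and if $n = 1$ then $R_{1,l}$ is a path and the lemma is false for the wrong parity, so implicitly $n \ge 2$, which is the only interesting case for the sweeping formation anyway).
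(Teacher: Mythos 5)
Your proof is correct, and it rests on the same structural fact as the paper's --- that every edge of $R_{n,l}$ lies on a $3$-cycle, so a single edge traversal can be replaced by a two-edge detour --- but you organize the argument differently. The paper runs a direct induction on $m$: given a walk of length $m$, it stops one step short of $v$ at a vertex $q$, picks the common neighbor $s$ of $q$ and $v$, and reroutes $q\to s\to v$ to get length $m+1$; each unit increment is handled uniformly and there is no parity case split. You instead split by parity, building closed walks of lengths $2$ and $3$ at $u$ and invoking the numerical-semigroup fact that every integer $\ge 2$ is a nonnegative combination of $2$ and $3$, with the single awkward case $m=M+1$ handled by the same triangle-detour move the paper uses. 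The paper's induction is shorter and avoids the $m=M+1$ special case; your version makes the role of the odd cycle (parity-breaking) more transparent and generalizes verbatim to any connected graph in which every vertex lies on an odd cycle, which is essentially the content of the remark following the lemma. Your flagged caveats are legitimate and apply equally to the paper's argument: the paper's proof silently assumes $u\neq v$ (for $u=v$ and $m=1$ the statement fails in a simple graph), and for $n=1$ the graph $R_{1,l}$ is a bipartite path with no triangles, so the lemma as stated needs $n\ge 2$.
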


\begin{proof}
We are given the graph $R_{n,l}$; see Figure~\ref{fig:caff_walks}.
We label two arbitrary distinct vertices $u\neq v$ on this grid, where $u$ denotes the starting point of a lion, and $v$ is the desired ending location of the lion.

We will proceed by induction on $m$.
The case $m=M$ is clear since $M$ is defined as the length of the shortest walk from $u$ to $v$ in this finite connected graph.

For the inductive step, suppose there is a walk of length $m$ from $u$ to $v$; we claim there exists a walk of length $m+1$ from $u$ to $v$.
Consider the walk of length $m$.
Let the lion travel along this walk until it has taken $m-1$ steps.
At this moment, the lion will be on a vertex adjacent to $v$; call this vertex $q$.
Since every two adjacent vertices in the graph $R_{n,l}$ are part of a common 3-cycle, there is some vertex $s$ adjacent to both $q$ and $v$.
In the next step, let the lion move from vertex $q$ to $s$.
Once on vertex $s$, the lion has traveled a walk of length $p$, and then moves one more step to $v$ in a walk of length $m+1$.

By induction, it is possible to travel from $u$ to $v$ in a walk of length exactly $m$ steps for any $m\ge M$.
\end{proof}

\begin{figure}
    \centering
    \includegraphics[width=.7\textwidth]{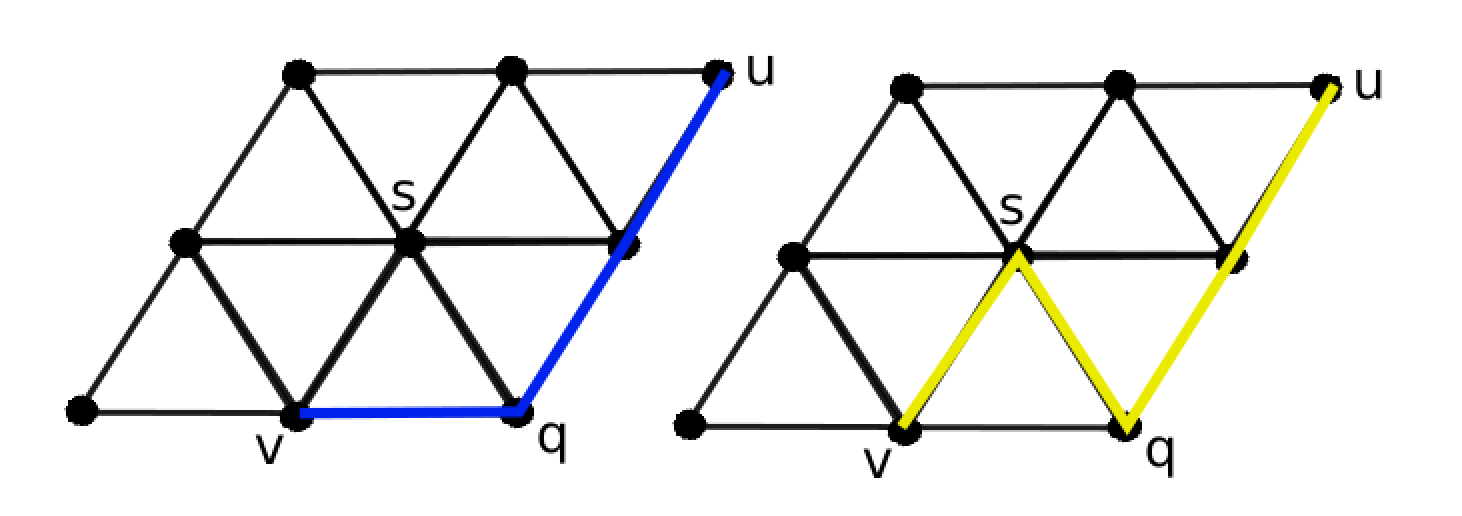}
    \caption{Figure accompanying the proof of Lemma~\ref{lem:starting}.}
    \label{fig:caff_walks}
\end{figure}

\begin{remark}
Lemma~\ref{lem:starting} more generally holds true for any connected graph $G$ containing an odd cycle.
\end{remark}

The property in Lemma~\ref{lem:starting} does not hold on the chessboard graph, i.e.\ a $n\times n$ square grid graph (see Figure~\ref{fig:s_n}).
This is because given any two vertices $u$ and $v$ in this graph, there is a parity (even or odd) such that any walk between $u$ and $v$ necessarily has length of that specified parity.
The same is true for any 2-colorable (i.e.\ bipartite) graph.

\begin{corollary}\label{cor:all_lions}
Consider the graph $R_{n,l}$.
Given any $k$ caffeinated lion starting positions $u_1,\ldots,u_k$ and any $k$ specified ending positions $v_1,\ldots,v_k$, there exists some $M\in \N$ so that we can have all lions move to arrive at the specified ending positions at (exactly) time step $M$.
\end{corollary}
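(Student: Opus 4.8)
The plan is to deduce the Corollary from Lemma~\ref{lem:starting} by applying it to each lion separately and then synchronizing the lengths of the resulting walks. First, for each index $i\in\{1,\ldots,k\}$, let $M_i$ be the length of a shortest path from $u_i$ to $v_i$ in $R_{n,l}$, which is finite since $R_{n,l}$ is connected. Set $M:=\max\{2,M_1,\ldots,M_k\}$. For every index $i$ with $u_i\neq v_i$, Lemma~\ref{lem:starting} (applicable because $M\ge M_i$) furnishes a walk $\pi_i$ from $u_i$ to $v_i$ of length exactly $M$. For an index $i$ with $u_i=v_i$, I would instead note that $R_{n,l}$ admits closed walks of every length at least $2$ based at each vertex --- backtracking along an incident edge gives length $2$, traversing a triangle through the vertex gives length $3$, and concatenating copies of these realizes every length $\ge 2$ --- so there is a closed walk $\pi_i$ based at $u_i=v_i$ of length exactly $M\ge 2$.

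It then remains to check that the family $(\pi_1,\ldots,\pi_k)$ is a legal caffeinated motion. Since $R_{n,l}$ is a simple graph it has no loops, so consecutive vertices along each $\pi_i$ are distinct; hence every lion genuinely moves at each of the $M$ time steps, which is exactly the caffeinated requirement. The rules also allow several lions to occupy the same vertex, so there is no collision constraint coupling the $\pi_i$ to one another. Having lion $i$ traverse $\pi_i$ for $i=1,\ldots,k$ therefore brings all $k$ lions to their prescribed positions $v_1,\ldots,v_k$ simultaneously at time step $M$, which is the claim.

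I do not foresee a genuine obstacle here: the content is essentially Lemma~\ref{lem:starting} repackaged for many lions at once, and the only point requiring care is the synchronization, i.e.\ producing a single value of $M$ that is simultaneously admissible for all $k$ source--target pairs, which the maximum handles. The sole piece of bookkeeping beyond a one-line citation is the degenerate case $u_i=v_i$, where one must respect the caffeinated rule by using a closed walk rather than leaving the lion in place; this is dispatched by the observation above that $R_{n,l}$ has closed walks of every length at least $2$ through each vertex.
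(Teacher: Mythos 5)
Your proof is correct and follows essentially the same route as the paper: apply Lemma~\ref{lem:starting} to each source--target pair and synchronize by taking the maximum of the admissible lengths. Your extra care with the degenerate case $u_i=v_i$ (using closed walks of length $2$ and $3$) is a reasonable refinement that the paper's proof glosses over, but it does not change the approach.
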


\begin{proof}
Let $1\le i\le k$ be arbitrary.
By Lemma~\ref{lem:starting}, we know that there exists an integer $M_i\in \N$ such that for any vertices $u_i,v_i\in R_{n,l}$, and any $m\ge M_i$, there exists a walk of length exactly $m$ between $u_i$ and $v_i$.
Now, let $M=\max\{M_i~|~1\le i\le k\}$.
For any $m\ge M$, and for all $1\le i\le k$, there exists a walk of length exactly $m$ from $u_i$ to $v_i$.
Hence the $k$ caffeinated lions can simultaneously move from their initial locations to their desired positions in exactly $m$ simultaneous steps.
\end{proof}

\begin{corollary}
$\lfloor\frac{3n}{2}\rfloor$ caffeinated lions suffice to clear the grid $R_{n,l}$, no matter their starting locations.
\end{corollary}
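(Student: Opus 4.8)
The plan is to combine the three ingredients assembled above: the explicit ``wall of triangles'' formation of Section~\ref{ssec:sweeping-formation}, Corollary~\ref{cor:all_lions} (which lets caffeinated lions reach any prescribed configuration at a common time $M$), and the sweeping procedure illustrated in Figure~\ref{fig:caff_formation}. First I would note that it suffices to treat the worst case in which, at the moment the lions finish assembling, every vertex not currently occupied by a lion is contaminated; any better initial configuration only helps. So, using Corollary~\ref{cor:all_lions}, move all $\lfloor\frac{3n}{2}\rfloor$ caffeinated lions simultaneously, in exactly $M$ steps, into the wall-of-triangles formation --- rows alternating between one and two lions, starting with one lion in the top row --- positioned along, say, the leftmost diagonal of $R_{n,l}$. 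A quick count confirms this uses $\lceil n/2\rceil + 2\lfloor n/2\rfloor = \lfloor \tfrac{3n}{2}\rfloor$ lions, matching the claimed bound.

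Next I would make precise, and maintain by induction on the sweep step, the invariant that the occupied vertices always form a connected vertical ``wall'' separating $R_{n,l}$ into a cleared side and a contaminated side, with the property that every edge joining a cleared vertex to a contaminated vertex is, at each step, crossed by some lion, so the contamination cannot leak through the wall. The alternating one--two pattern is exactly what makes this work: the two horizontally adjacent lions in a ``wide'' row together with the single lion in an adjacent ``narrow'' row occupy all three vertices of a common triangle, so the wall blocks not only the horizontal edges but also the diagonal ones across the cut. The advancing move is then to shift the entire wall one step toward the contaminated side: every lion genuinely moves (respecting the caffeinated constraint), the freshly vacated column becomes cleared, and the invariant shows it is immediately protected because all of its neighbors lie either in the new wall position or in the already-cleared region. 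Iterating clears the contaminated side down to the boundary; to clear the other side one reflects the wall and sweeps back the opposite way, which is harmless since the region already cleared has no contaminated neighbor to reinfect it.

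The step I expect to be the main obstacle is keeping \emph{all} lions in motion at \emph{every} turn --- the caffeinated condition --- during the parts of the sweep where a portion of the wall has run out of contamination to push against, in particular at the corners of the parallelogram. Here the resolution is the same structural fact used in Lemma~\ref{lem:starting}: every edge of $R_{n,l}$ lies in a $3$-cycle, so any lion that is temporarily ``idle'' can instead circulate around a triangle (or repeatedly swap with a lion on an adjacent vertex) without changing the set of vertices the wall occupies. The real work is a careful case analysis verifying that these bookkeeping rotations can always be scheduled so that (i) no lion is ever forced to stand still, (ii) the wall never develops a gap, and (iii) contamination never slips past during the turnaround at a corner. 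Once that case analysis is in hand, the remainder is the routine barrier induction described above, and the corollary follows.
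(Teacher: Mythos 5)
Your proposal is correct and follows essentially the same route as the paper: invoke Corollary~\ref{cor:all_lions} to bring the caffeinated lions into the wall-of-triangles formation from any starting position, then run the sweep of Section~\ref{ssec:sweeping-formation}, using $3$-cycles to keep otherwise idle lions moving at the corners. The extra detail you supply (the lion count $\lceil n/2\rceil+2\lfloor n/2\rfloor=\lfloor 3n/2\rfloor$, the barrier invariant, and the explicit treatment of the turnaround) is material the paper delegates to its description of the sweeping formation and to Figure~\ref{fig:caff_formation}, so it elaborates rather than departs from the paper's argument.
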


\begin{proof}
It follows from Corollary~\ref{cor:all_lions} that the caffeinated lions can go into the sweeping formation from any starting position.
We have already shown in Section~\ref{ssec:sweeping-formation} that the caffeinated lions can clear the graph from here.
\end{proof}

\subsection{Insufficiency of $\lfloor\frac{n}{2}\rfloor$ lions on a triangulated square}
\label{ssec:insuffic-Rn}

We will use some of the methods and proofs from~\cite{BGGK} to show that $\lfloor\frac{n}{2}\rfloor$ (non-caffeinated) lions cannot clear $R_n:=R_{n,n}$.
In this proof, we stretch the ``rhombus'' graph $R_n$ to instead be drawn as a square triangulated by right triangles.
It should be noted that this grid holds all of the same properties as before (the isomorphism type of the graph is unchanged).

We define $S_n$ to be the $n \times n$ square grid graph with $n$ vertices per side as discussed in~\cite{BGGK}; see Figure~\ref{fig:s_n}.
When each square is subdivided via a diagonal edge, drawn from the top left to the bottom right, then we obtain a graph isomorphic to $R_n$ as shown in Figure~\ref{fig:r_n}.

\begin{figure}
\centering
\begin{minipage}{.5\textwidth}
  \centering
  \includegraphics[width=.4\linewidth]{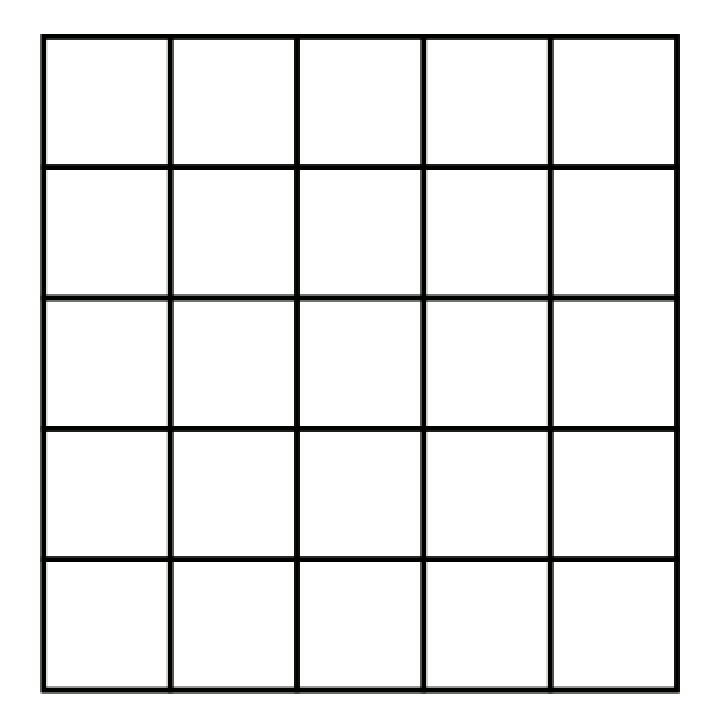}
  \caption{The graph $S_6$.}
  \label{fig:s_n}
\end{minipage}%
\begin{minipage}{.5\textwidth}
  \centering
  \includegraphics[width=.4\linewidth]{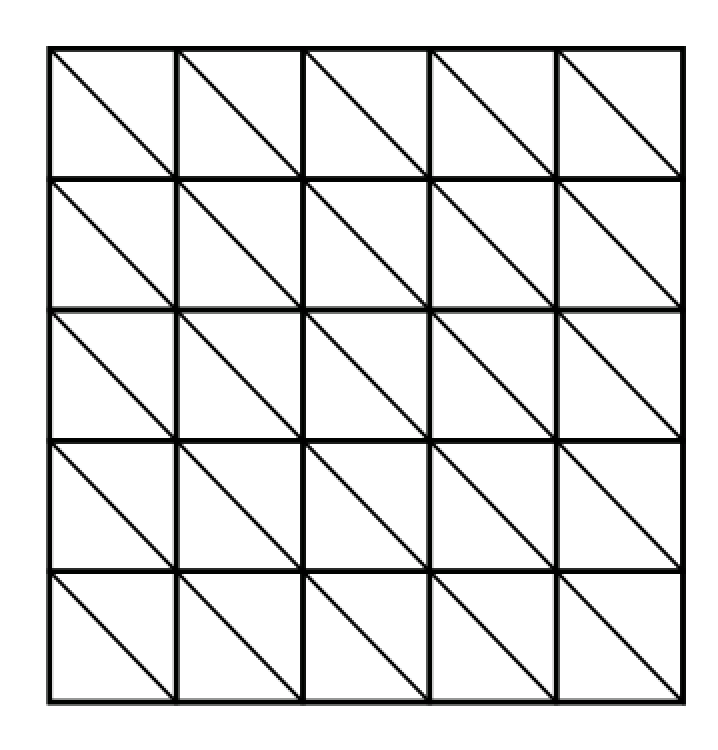}
  \caption{The graph $R_6$ drawn as a square instead of a rhombus.}
  \label{fig:r_n}
\end{minipage}
\end{figure}

The following two lemmas from~\cite{BGGK} hold in an arbitrary graph.

\begin{lemma}\label{lem:BGGK1}
(Lemma~1 of~\cite{BGGK})
Let $k$ be the number of lions on a graph.
The number of cleared vertices cannot increase by more than $k$ in one time step.
\end{lemma}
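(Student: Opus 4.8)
The plan is to track the quantity $|C(t)|$, the number of cleared vertices at time $t$, and show it cannot increase under a single lion's move — or rather, that a move by one lion can clear at most one genuinely new vertex net of the recontamination that the move permits. The statement to prove (Lemma~1 of~\cite{BGGK}) is that $|C(t+1)| \le |C(t)| + k$ when $k$ lions are present, so the natural strategy is to prove the $k=1$ case and then note that the general bound follows by treating the simultaneous motion of $k$ lions as a cumulative effect. First I would set up the bookkeeping: fix a time step and compare $C(t)$ with $C(t+1)$. A vertex $v$ lies in $C(t+1)$ either because (a) it was in $C(t)$ and did not get recontaminated, or (b) it was contaminated at time $t$ but a lion moved onto it. Vertices of type (a) form a subset of $C(t)$, so they contribute at most $|C(t)|$ to the count. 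The entire increase $|C(t+1)| - |C(t)|$ is therefore bounded by the number of type-(b) vertices minus the number of vertices in $C(t)$ that became recontaminated; in particular it is at most the number of type-(b) vertices.

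The key step is then to bound the number of type-(b) vertices by $k$. A vertex $v$ is of type (b) only if some lion occupies $v$ at time $t+1$ having crossed an edge into $v$ (or possibly having been there already, but then $v$ would have been in $C(t)$ unless it was just recontaminated — a subtlety I would need to handle by the recontamination rules stated in the excerpt). Since there are $k$ lions and each occupies exactly one vertex at time $t+1$, the number of vertices occupied by lions at time $t+1$ is at most $k$. Every type-(b) vertex must be occupied by a lion at time $t+1$ (that is precisely what "a lion moved to a new vertex, clearing it" means, and it is also the only way a previously-contaminated vertex can be in $C(t+1)$), so there are at most $k$ of them. Combining, $|C(t+1)| \le |C(t)| + k$.

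I would organize the write-up as: (i) partition $C(t+1)$ into the "survivors" $C(t+1)\cap C(t)$ and the "newly cleared" vertices $C(t+1)\setminus C(t)$; (ii) observe $|C(t+1)\cap C(t)| \le |C(t)|$ trivially; (iii) show every vertex in $C(t+1)\setminus C(t)$ is occupied by a lion at time $t+1$, using the contamination-clearing rule, so $|C(t+1)\setminus C(t)| \le k$; (iv) add. The main obstacle — really the only place care is needed — is step (iii) and the precise handling of the rule that "in the same time step that a lion leaves a vertex, it can become recontaminated": I must make sure that a vertex cleared at time $t$ which is vacated and then sits in $C(t+1)$ is genuinely a survivor (still cleared because no contaminated neighbor, or because another lion blocked the edge), not miscounted as newly cleared. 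Once the casework confirms that $C(t+1)\setminus C(t)$ consists exactly of currently lion-occupied vertices that were contaminated at time $t$, the bound is immediate.
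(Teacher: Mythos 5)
Your argument is correct: the partition of $C(t+1)$ into survivors $C(t+1)\cap C(t)$ and newly cleared vertices $C(t+1)\setminus C(t)$, together with the observation that every newly cleared vertex must be occupied by one of the $k$ lions at time $t+1$, is exactly the standard counting argument for this lemma (the paper itself defers the proof to Lemma~1 of~\cite{BGGK}, which proceeds the same way). The initial detour through a ``$k=1$ then accumulate'' strategy is unnecessary, but your final organization (i)--(iv) is the right proof and handles the recontamination subtlety correctly, since any vacated-but-still-cleared vertex lands in the survivor set and is absorbed by the $|C(t)|$ term.
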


\begin{lemma}\label{lem:BGGK2}
(Lemma~2 of~\cite{BGGK})
Let $k$ be the number of lions.
If there are at least $2k$ boundary vertices in the set $C(t)$ of cleared vertices, then the number of cleared vertices cannot increase in the following step:
$|\partial C(t)| \ge 2k$ implies $|C(t+1)| \le |C(t)|$.
\end{lemma}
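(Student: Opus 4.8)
The plan is to compare, over a single time step, the number of vertices that become newly cleared with the number of previously cleared vertices that become recontaminated; calling these $g := |C(t+1)\setminus C(t)|$ and $\ell := |C(t)\setminus C(t+1)|$, we have $|C(t+1)| - |C(t)| = g - \ell$. First I would observe that a vertex in $C(t)\setminus\partial C(t)$ has all of its neighbors in $C(t)$, hence no contaminated neighbor at time $t$, and so cannot be recontaminated; thus every recontaminated vertex lies in $\partial C(t)$, and writing $r := |\partial C(t)\cap C(t+1)|$ for the number of boundary vertices that survive the step, we get $\ell = |\partial C(t)| - r$. Second, every newly cleared vertex must be occupied by a lion at time $t+1$ (a vertex becomes newly cleared only by a lion moving onto it), and distinct such vertices need distinct lions, so $g\le k$; this is essentially Lemma~\ref{lem:BGGK1}.

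The heart of the proof is to show $r\le k$, which I would do by charging each surviving boundary vertex $v\in\partial C(t)\cap C(t+1)$ injectively to a lion. Since $v\in\partial C(t)$ it has a neighbor $u\notin C(t)$, and since $v$ is not recontaminated, either (a) a lion occupies $v$ at time $t+1$, or (b) no lion occupies $v$ at time $t+1$ but every edge from $v$ to a time-$t$ contaminated neighbor is crossed by a lion. In case (a) I charge $v$ to a lion on $v$ at time $t+1$. In case (b) I fix a contaminated neighbor $u$ of $v$ together with a lion crossing the edge $vu$; since no lion is on $v$ at time $t+1$, this lion must travel from $v$ at time $t$ to $u$ at time $t+1$, and I charge $v$ to that lion. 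A lion charged in case (a) is pinned down by its (unique) position $v$ at time $t+1$; a lion charged in case (b) is pinned down by its (unique) position $v$ at time $t$; and no lion is charged in both ways, since a case-(a) lion ends on the cleared vertex $v\in C(t)$ while a case-(b) lion ends on the contaminated vertex $u\notin C(t)$. Hence the charging is injective, so $r\le k$.

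Combining the three facts, $\ell = |\partial C(t)| - r \ge 2k - k = k \ge g$, so $|C(t+1)| - |C(t)| = g - \ell \le 0$, which is the claim. The step I expect to require the most care is the injectivity bookkeeping in the second paragraph: identifying which endpoint of a blocked edge a defending lion occupies at time $t+1$, and using this to rule out one lion being charged by two distinct surviving boundary vertices. The remaining steps are routine counting.
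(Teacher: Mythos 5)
Your proof is correct. Note that the paper does not prove this lemma at all --- it is quoted from~\cite{BGGK} --- so there is no in-paper argument to compare against; your charging argument (each lion accounts for at most one newly cleared vertex and at most one ``saved'' boundary vertex, so $\ell \ge |\partial C(t)| - k \ge k \ge g$) is a correct, self-contained proof in the spirit of the original, and the injectivity bookkeeping you flag as delicate does go through, since a case-(a) lion sits at time $t+1$ on a vertex of $C(t)$ while a case-(b) lion sits at time $t+1$ on a vertex outside $C(t)$.
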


For the specific case of square grid graphs $S_n$,~\cite{BGGK} defines a ``fall-down transformation''.
This transformation $T$ takes any subset of the vertices of $S_n$, and maps it to a (potentially different) subset of the same size.
The first step in the fall-down transformation moves the subset of vertices down each column as far as possible.
The number of vertices in any column remains unchanged by this step.
The second step in the fall-down transformation is to then move each vertex as far as possible to the left-hand side of its row, maintaining the number of vertices in any row.

In the case of a square grid $S_n$,~\cite{BGGK} proves that the fall-down transformation does not increase the number of boundary vertices in a set:

\begin{lemma}\label{lem:BGGK4}
(Lemma~4 of~\cite{BGGK}) In the graph $S_n$, the fall-down transformation $T$ is monotone, meaning that the number of boundary vertices in a subset $S$ of vertices from $S_n$ does not increase upon applying the fall-down transformation.
\end{lemma}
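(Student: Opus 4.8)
The plan is to reduce the claim to a single directional compression and then to compute the boundary of the compressed set exactly, column by column. First I would factor the fall-down transformation as $T=L\circ D$, where $D$ pushes the occupied cells of a set as far down as possible within each column and $L$ pushes them as far left as possible within each row. Up to an element of the dihedral symmetry group of $S_n$ (which preserves $|\partial(\cdot)|$), the operation $L$ is just $D$ in disguise, so it suffices to prove the single statement that for \emph{every} subset $A\subseteq V(S_n)$ the down-compressed set $D(A)$ satisfies $|\partial D(A)|\le|\partial A|$; applying this to $S$ and then to $D(S)$ (with the roles of rows and columns swapped) yields $|\partial T(S)|\le|\partial D(S)|\le|\partial S|$.

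To prove $|\partial D(A)|\le|\partial A|$, index the columns $c=1,\dots,n$, let $S_c\subseteq\{1,\dots,n\}$ be the set of occupied rows of $A$ in column $c$, and put $k_c:=|S_c|$. The operation $D$ leaves every $k_c$ unchanged and makes column $c$ of $D(A)$ equal to its bottom $k_c$ cells $\{1,\dots,k_c\}$. Writing $B_c$ (resp.\ $B_c^{\downarrow}$) for the number of boundary vertices of $A$ (resp.\ of $D(A)$) lying in column $c$, we have $|\partial A|=\sum_c B_c$ and $|\partial D(A)|=\sum_c B_c^{\downarrow}$, so it is enough to show $B_c\ge B_c^{\downarrow}$ for each $c$. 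For $D(A)$ this is an exact count: a cell $(c,r)$ with $1\le r\le k_c$ has its lower neighbour inside $D(A)$ automatically, has its upper neighbour outside precisely when $r=k_c<n$, and has its left (resp.\ right) neighbour outside precisely when $r>k_{c-1}$ (resp.\ $r>k_{c+1}$), using the convention $k_0=k_{n+1}=\infty$ at the two sides of the grid. Counting the rows meeting at least one of these conditions gives $B_c^{\downarrow}=\max\bigl(k_c-\min(k_{c-1},k_{c+1}),\,\varepsilon_c\bigr)$, where $\varepsilon_c=1$ if $1\le k_c<n$ and $\varepsilon_c=0$ otherwise.

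It then remains to bound $B_c$ below by the same quantity. Each row in $S_c\setminus S_{c-1}$ yields a vertex of $A$ in column $c$ whose left neighbour lies outside $A$, hence a boundary vertex, so $B_c\ge|S_c\setminus S_{c-1}|\ge k_c-k_{c-1}$ (the case $c=1$ being vacuous under the convention $k_0=\infty$); symmetrically $B_c\ge k_c-k_{c+1}$, and therefore $B_c\ge k_c-\min(k_{c-1},k_{c+1})$. Moreover, whenever $1\le k_c<n$, column $c$ of $A$ contains either its topmost occupied cell (which has an empty cell above it) or, if its occupied rows reach row $n$, the lowest cell of the uppermost block of occupied cells (which has an empty cell below it); in either case this cell is a boundary vertex, so $B_c\ge\varepsilon_c$. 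Combining the two bounds gives $B_c\ge\max\bigl(k_c-\min(k_{c-1},k_{c+1}),\,\varepsilon_c\bigr)=B_c^{\downarrow}$, and summing over $c$ completes the proof.

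The part I expect to require the most care is conceptual: the lemma cannot be proved by sliding cells down one at a time, since a single downward slide of a vertex into the hole immediately beneath it can strictly \emph{increase} the number of boundary vertices — for instance, moving the unique occupied cell of a thin column down by one can expose the neighbour that cell had been shielding. Hence the comparison must be carried out in a single step between the fully compressed set and the original, and the crux is recognizing that the compressed boundary count is governed entirely by the column-height differences $k_c-k_{c\pm1}$ together with at most one unavoidable ``top-of-column'' vertex, each of which is already dominated in the original set by boundary vertices having a missing horizontal neighbour. What remains is only bookkeeping: the grid-edge columns (handled by $k_0=k_{n+1}=\infty$) and the $\varepsilon_c$ term in the full, empty, and singleton-column cases.
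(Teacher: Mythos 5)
Your proof is correct. Note that the paper itself gives no proof of this lemma --- it is quoted verbatim as Lemma~4 of~\cite{BGGK} --- so you have supplied a self-contained argument for an imported result. The argument is sound: the factorization $T=L\circ D$ with $L$ conjugate to $D$ under the diagonal reflection of $S_n$ correctly reduces the claim to the single compression $D$; the exact formula $B_c^{\downarrow}=\max\bigl(k_c-\min(k_{c-1},k_{c+1}),\,\varepsilon_c\bigr)$ is right (the conventions $k_0=k_{n+1}=\infty$ and the full/empty-column cases all check out); and the two lower bounds $B_c\ge|S_c\setminus S_{c-1}|\ge k_c-k_{c-1}$ (and symmetrically for $c+1$) together with $B_c\ge\varepsilon_c$ dominate it column by column. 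This column-wise compression comparison is essentially the standard route to such discrete isoperimetric monotonicity statements, and your closing remark --- that the one-cell-at-a-time sliding argument fails, so the comparison must be made in one step against the fully compressed set --- correctly identifies the point where a naive proof would break.
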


Since the vertex set of $S_n$ is the same as the vertex set of $R_n$, we immediately get a fall-down transformation for $T$ acting on $R_n$ that maps a subset of vertices in $R_n$ to a subset of vertices in $R_n$.
We will show that this new fall-down transformation has the same monotonicity property, which is not a priori clear as the boundary of a set of vertices in $S_n$ might be smaller than the boundary of that same set of vertices in $R_n$.
Note that in general, if $v$ is a boundary vertex of a set $S$ in $S_n$, then $v$ is a boundary vertex of $S$ in $R_n$.
This implies that a set of vertices in $S_n$ has no more boundary vertices than if that set of vertices were in $R_n$.

Another comment is that when defining the fall down transformation $T$ on $S_n$, the choice of down vs.\ up or of left vs.\ right does not matter.
But these choices do matter when defining a fall-down transformation on $R_n$, due to the diagonal edges as drawn in Figure~\ref{fig:r_n}.
We want to emphasize we have chosen to map down and to the left; the following lemma in part depends on this choice.
Refer to Figure~\ref{fig:fdt_fail} to see that Lemma \ref{lem:FDT} fails if we instead allow $T$ to move down and to the right.

\begin{figure}
\centering
\includegraphics[width = .5\textwidth]{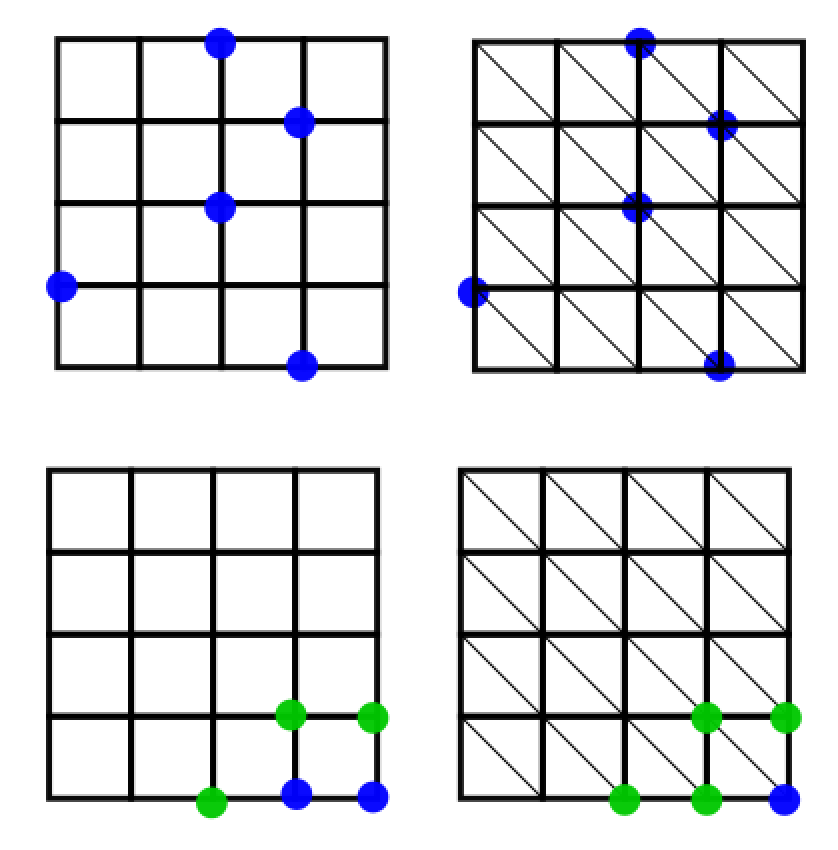}
\caption{Given the initial starting position of a set $S$ in blue on $S_n$ and $R_n$, notice that applying a modified fall-down transformation down and to the \emph{right} results in 3 boundary vertices in $S_n$ and 4 boundary vertices in $R_n$, shown in green.}
\label{fig:fdt_fail}
\end{figure}

\begin{lemma}\label{lem:FDT}
Let $S$ be a set of vertices in $R_n$ (or equivalently, in $S_n$).
The set of boundary vertices of $T(S)$ in $R_n$ is the same as the set of boundary vertices of $T(S)$ in $S_n$.
\end{lemma}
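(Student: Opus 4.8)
The plan is to first determine the shape of $T(S)$ exactly, and then compare its boundary in the two graphs vertex by vertex. Set up coordinates on the common vertex set of $S_n$ and $R_n$: after applying $T$, index the rows $1,\dots,n$ from the bottom and the columns $1,\dots,n$ from the left, and write $(x,y)$ for the vertex in column $x$ of row $y$. The first step of $T$ pushes each column down, so afterwards the $c_j$ vertices in column $j$ occupy rows $1,\dots,c_j$; consequently the number of vertices in a row is a weakly decreasing function of the row index. The second step of $T$ pushes each row to the left without changing these row counts. Hence $T(S)$ is a ``staircase'': there are integers $r_1\ge r_2\ge\cdots\ge r_n\ge 0$ such that $(x,y)\in T(S)$ if and only if $1\le x\le r_y$. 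Recall that in $S_n$ the neighbors of $(x,y)$ are $(x\pm1,y)$ and $(x,y\pm1)$, whereas in $R_n$ there are two additional diagonal neighbors $(x+1,y-1)$ and $(x-1,y+1)$, corresponding to the top-left--to--bottom-right diagonals.

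One inclusion is immediate and was already observed just before the lemma: since $S_n$ and $R_n$ share a vertex set and every edge of $S_n$ is an edge of $R_n$, any vertex with a neighbor outside $T(S)$ in $S_n$ also has one in $R_n$, so the boundary of $T(S)$ in $S_n$ is contained in its boundary in $R_n$. For the reverse inclusion, fix $v=(x,y)$ in the boundary of $T(S)$ in $R_n$. If some horizontal or vertical neighbor of $v$ lies outside $T(S)$ we are done immediately, so assume that the only neighbor of $v$ outside $T(S)$ is one of the two diagonal neighbors, and treat the two possibilities in turn.

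Suppose first that $(x+1,y-1)$ is a vertex of $R_n$ (so $x+1\le n$ and $y\ge2$) and lies outside $T(S)$; then, as $x+1\ge1$, we get $r_{y-1}\le x$. But $(x,y)\in T(S)$ forces $r_y\ge x$, and the monotonicity $r_{y-1}\ge r_y$ then gives $r_y=x$, so $(x+1,y)$ is a vertex of $R_n$ outside $T(S)$ --- a horizontal neighbor of $v$, a contradiction. Suppose instead that $(x-1,y+1)$ is a vertex of $R_n$ (so $x\ge2$ and $y+1\le n$) and lies outside $T(S)$; then, as $x-1\ge1$, we get $x-1>r_{y+1}$, i.e.\ $r_{y+1}\le x-2<x$, so $(x,y+1)$ is a vertex of $R_n$ outside $T(S)$ --- a vertical neighbor of $v$, again a contradiction. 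In either case $v$ lies in the boundary of $T(S)$ in $S_n$, completing the proof that the two boundaries coincide.

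I expect the only delicate point to be the bookkeeping relating the orientation of the diagonal to the corner the staircase occupies: this is precisely why the lemma fails for the ``down and to the right'' variant of $T$ (Figure~\ref{fig:fdt_fail}), since the inequalities $r_{y-1}\ge r_y$ and $r_{y+1}\le r_y$ only push in the favorable direction when the staircase is pinned to the bottom-left corner and the diagonals run from top-left to bottom-right. The other thing to keep straight is the boundary of the grid: when a putative diagonal neighbor $(x+1,y-1)$ or $(x-1,y+1)$ falls outside the $n\times n$ array it is not a vertex of $R_n$ at all, so it never contributes to the boundary and the corresponding case simply does not arise.
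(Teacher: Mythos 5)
Your proof is correct and follows essentially the same route as the paper's: the easy inclusion comes from every edge of $S_n$ being an edge of $R_n$, and for the reverse inclusion you show that a missing diagonal neighbor of $v$ forces a missing horizontal or vertical neighbor, which is precisely the paper's reduction ``$d\notin T(S)\Rightarrow c\notin T(S)$'' and ``$a\notin T(S)\Rightarrow b\notin T(S)$''. The only difference is that you make explicit the staircase structure of $T(S)$ (weakly decreasing row lengths $r_1\ge\cdots\ge r_n$) that the paper leaves implicit in its figures, which is a welcome addition of rigor and also cleanly explains why the ``down and to the right'' variant fails.
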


\begin{proof}
We will show that the set of boundary vertices of $T(S)$ in $R_n$ is the same as the set of boundary vertices of $T(S)$ in $S_n$.
First suppose that a vertex $v$ of $T(S)$ is a boundary vertex in $R_n$.
Referring to the diagram in Figure~\ref{fig:r_n-boundary}, this implies that one of $d, c, b$ or $a$ is not in $T(S)$.
However, we can reduce this to the case that one of $c$ or $b$ is not in $T(S)$, since $d \notin T(S) \Rightarrow c \notin T(S)$, and since $a \notin T(S) \Rightarrow b \notin T(S)$.
If $c \notin T(S)$, then $v$ is a boundary vertex of $T(S)$ in both $R_n$ and $S_n$.
The same is true if $b \notin T(S)$.
This proves that if $v$ is a boundary vertex of $T(S)$ in $R_n$, then it is a boundary vertex of $T(S)$ in $S_n$.
On the other hand, referring to Figure~\ref{fig:s_n-boundary}, if $v$ is a boundary vertex of $T(S)$ in $S_n$, then one of $c$ or $b$ is not in $T(S)$, which implies that $v$ is a boundary vertex of $T(S)$ in $R_n$ as well.
Therefore the set of boundary vertices of $T(S)$ in $R_n$ is the same as the set of boundary vertices of $T(S)$ in $S_n$.

\end{proof}

\begin{figure}
\centering
\begin{minipage}{.5\textwidth}
  \centering
  \includegraphics[width=.4\linewidth]{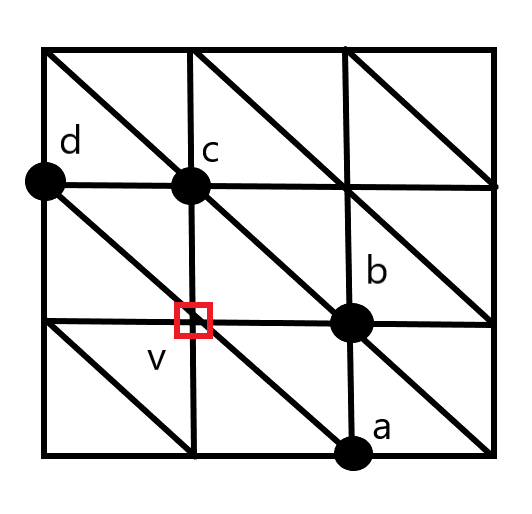}
  \caption{$R_n$.}
  \label{fig:r_n-boundary}
\end{minipage}%
\begin{minipage}{.5\textwidth}
  \centering
  \includegraphics[width=.4\linewidth]{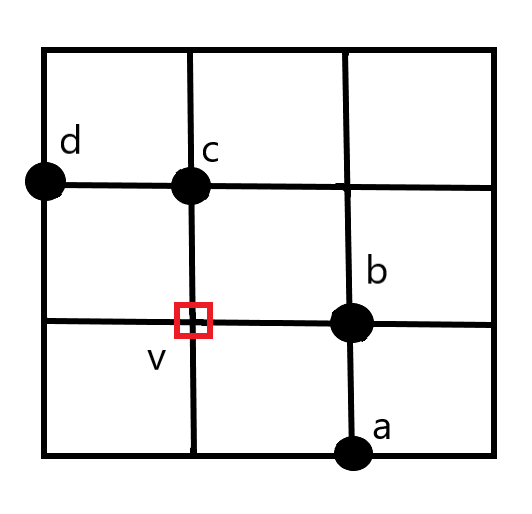}
  \caption{$S_n$.}
  \label{fig:s_n-boundary}
\end{minipage}
\end{figure}

We know that $T$ is monotone on $S_n$, i.e.\ that the number of boundary vertices does not increase as a result of applying $T$.
Lemma~\ref{lem:FDT} therefore immediately implies that $T$ is also monotone on $R_n$, i.e.\ that the number of boundary vertices of a set $S$ in $R_n$ is no more than the number of boundary vertices of $T(S)$ in $R_n$.

\begin{corollary}
In the graph $R_n$, the fall-down transformation $T$ is monotone, meaning that the number of boundary vertices in a subset $S$ of vertices from $R_n$ does not increase upon applying the fall-down transformation.
\end{corollary}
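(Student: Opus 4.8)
The plan is to chain together the three ingredients that are already in hand: the monotonicity of $T$ on $S_n$ (Lemma~\ref{lem:BGGK4}), the boundary identification of Lemma~\ref{lem:FDT}, and the elementary fact that $R_n$ has all the edges of $S_n$ together with the extra diagonals. Write $\partial_{S_n} S$ and $\partial_{R_n} S$ for the boundary of a vertex set $S$ computed in $S_n$ and in $R_n$, respectively. The goal is to prove $|\partial_{R_n} T(S)| \le |\partial_{R_n} S|$ for every $S \subseteq V(R_n) = V(S_n)$.

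First I would record the inclusion $\partial_{S_n} S \subseteq \partial_{R_n} S$: if $v \in S$ has a neighbor outside $S$ in $S_n$, then that same neighbor witnesses $v \in \partial_{R_n} S$, since every edge of $S_n$ is an edge of $R_n$. In particular $|\partial_{S_n} S| \le |\partial_{R_n} S|$. (We only need this for the set $S$ itself; for $T(S)$ Lemma~\ref{lem:FDT} upgrades the inclusion to an equality, which is the crucial point.) Then the chain is immediate:
\[ |\partial_{R_n} T(S)| \;=\; |\partial_{S_n} T(S)| \;\le\; |\partial_{S_n} S| \;\le\; |\partial_{R_n} S|, \]
where the equality is Lemma~\ref{lem:FDT}, the middle inequality is the monotonicity of $T$ on $S_n$ (Lemma~\ref{lem:BGGK4}), and the last inequality is the inclusion above. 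This shows $T$ is monotone on $R_n$.

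At this stage there is essentially no obstacle remaining: all of the genuine work lives in Lemma~\ref{lem:FDT}, whose proof relies on the specific ``down-and-to-the-left'' convention for $T$ and, as Figure~\ref{fig:fdt_fail} shows, would break under the ``down-and-to-the-right'' convention. The only subtlety worth flagging is the direction of that equality: if one only knew $\partial_{R_n} T(S) \supseteq \partial_{S_n} T(S)$ (the inclusion that holds for every set), the argument would point the wrong way, so it is precisely the equality in Lemma~\ref{lem:FDT} that closes the loop. I would therefore present this corollary as a one-line consequence, with the sentence immediately preceding it already doing the bookkeeping.
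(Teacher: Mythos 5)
Your proposal is correct and follows exactly the paper's intended argument: the chain $|\partial_{R_n} T(S)| = |\partial_{S_n} T(S)| \le |\partial_{S_n} S| \le |\partial_{R_n} S|$ using Lemma~\ref{lem:FDT}, Lemma~\ref{lem:BGGK4}, and the inclusion $\partial_{S_n} S \subseteq \partial_{R_n} S$ is precisely what the paper does in the sentences surrounding the corollary. Your observation that the equality (not merely the inclusion) in Lemma~\ref{lem:FDT} is what makes the inequalities point the right way is exactly the right point to flag.
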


This now brings us to the last lemma.

\begin{lemma}\label{lem:BGGK5}
(Lemma~5 of~\cite{BGGK})
Any vertex set $S$ on $R_n$ satisfying $\frac{n^2}{2} - \frac{n}{2} < |S| < \frac{n^2}{2} + \frac{n}{2}$ has at least $n$ boundary vertices.
\end{lemma}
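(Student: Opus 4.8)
The plan is to transfer the statement from the square grid $S_n$, where it is Lemma~5 of~\cite{BGGK}, to the triangulated square $R_n$, using the fall-down transformation $T$ together with Lemma~\ref{lem:FDT} and the monotonicity corollary for $R_n$ established just above. Fix a vertex set $S$ in $R_n$ with $\frac{n^2}{2}-\frac{n}{2} < |S| < \frac{n^2}{2}+\frac{n}{2}$. First I would observe that $T$ preserves cardinality: its first step only permutes vertices within each column and its second step only permutes vertices within each row, so $|T(S)| = |S|$, and in particular $T(S)$ satisfies the same strict size bounds as $S$.

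Next I would chain together the three facts now at our disposal. By the monotonicity corollary for $R_n$, the number of boundary vertices of $S$ in $R_n$ is at least the number of boundary vertices of $T(S)$ in $R_n$. By Lemma~\ref{lem:FDT}, the set of boundary vertices of $T(S)$ in $R_n$ is literally the same set as the set of boundary vertices of $T(S)$ in $S_n$, so these two counts agree. Finally, applying Lemma~5 of~\cite{BGGK} to $T(S)$ viewed as a subset of $S_n$ --- valid because $\frac{n^2}{2}-\frac{n}{2} < |T(S)| < \frac{n^2}{2}+\frac{n}{2}$ --- shows that $T(S)$ has at least $n$ boundary vertices in $S_n$. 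Concatenating these, $S$ has at least $n$ boundary vertices in $R_n$, which is the desired conclusion.

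The only real subtlety is the middle step: passing from $S_n$ to $R_n$ could in principle create new boundary vertices, but Lemma~\ref{lem:FDT} rules this out for fallen-down sets, and that is precisely where the deliberate choice to fall down and to the \emph{left} (rather than down and to the right) is needed. If one prefers a self-contained argument not citing~\cite{BGGK} for the $S_n$ bound, the last step can be carried out directly instead: after falling down and then left, $T(S)$ is a bottom-left-justified staircase region described by a non-increasing sequence of row lengths $r_1 \ge \cdots \ge r_n \ge 0$ with $\sum_i r_i = |S|$, its non-boundary (interior) vertices in $S_n$ are easy to enumerate in terms of the $r_i$, and the hypothesis $\frac{n^2}{2}-\frac{n}{2} < |S| < \frac{n^2}{2}+\frac{n}{2}$ then forces at least $n$ vertices onto the boundary. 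I expect this final bookkeeping to be routine; the genuinely structural work has already been done in Lemma~\ref{lem:FDT} and the monotonicity corollary.
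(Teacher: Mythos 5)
Your proposal is correct and follows essentially the same route as the paper, which also reduces the claim to the $S_n$ case via monotonicity of the fall-down transformation; the paper's version is terser (it simply asserts that ``the proof of Lemma~5 from~\cite{BGGK} will also hold''), whereas you make the reduction explicit by chaining $|\partial_{R_n} S| \ge |\partial_{R_n} T(S)| = |\partial_{S_n} T(S)| \ge n$ using the monotonicity corollary, Lemma~\ref{lem:FDT}, and the $S_n$ statement applied to $T(S)$. Your explicit note that $T$ preserves cardinality, so that $T(S)$ satisfies the same size bounds, is a detail the paper leaves implicit and is worth having.
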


\begin{proof}
By Lemma~\ref{lem:BGGK4}, we know that the fall-down transformation $T$ acts monotonically on the number of boundary vertices in $R_n$.
Thus the proof of Lemma~5 from~\cite{BGGK} will also hold for our grid graph with diagonal edges added.

\end{proof}

\begin{theorem}
\label{thm:Dn}
$\lfloor\frac{n}{2}\rfloor$ lions do not suffice to clear $R_n$.
\end{theorem}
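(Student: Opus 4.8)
The plan is to run exactly the contradiction argument that~\cite{BGGK} uses for the square grid $S_n$, now that Lemmas~\ref{lem:BGGK1}, \ref{lem:BGGK2}, and~\ref{lem:BGGK5} are all available in $R_n$. Suppose toward a contradiction that $k := \lfloor \tfrac{n}{2}\rfloor$ lions can sweep $R_n$. Before starting I would record the two elementary facts I will use: $2k \le n$ (with equality precisely when $n$ is even), and $k < n$ for every $n \ge 1$.

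Next I would track the sequence $|C(0)|, |C(1)|, |C(2)|, \dots$ of sizes of the cleared set. At time $0$ the cleared vertices are exactly the lion positions, so $|C(0)| \le k \le \tfrac{n}{2}$, which is strictly less than $\tfrac{n^2}{2} + \tfrac{n}{2}$ for all $n\ge 1$; and since a successful sweep ends with $|C(t)| = n^2 \ge \tfrac{n^2}{2} + \tfrac{n}{2}$, there is a well-defined first time $t^\ast \ge 1$ with $|C(t^\ast)| \ge \tfrac{n^2}{2} + \tfrac{n}{2}$. The idea is that the ``forbidden window'' $\bigl(\tfrac{n^2}{2} - \tfrac{n}{2},\ \tfrac{n^2}{2} + \tfrac{n}{2}\bigr)$ has width $n$, which is both too wide to step over in a single move (Lemma~\ref{lem:BGGK1}) and, by Lemma~\ref{lem:BGGK5}, a regime in which the cleared set cannot grow at all (Lemma~\ref{lem:BGGK2}).

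Concretely, I would examine $|C(t^\ast - 1)|$ and split into two cases. If $|C(t^\ast-1)| > \tfrac{n^2}{2} - \tfrac{n}{2}$, then, combined with $|C(t^\ast-1)| < \tfrac{n^2}{2} + \tfrac{n}{2}$ from the minimality of $t^\ast$, Lemma~\ref{lem:BGGK5} gives $|\partial C(t^\ast-1)| \ge n \ge 2k$, so Lemma~\ref{lem:BGGK2} forces $|C(t^\ast)| \le |C(t^\ast-1)| < \tfrac{n^2}{2} + \tfrac{n}{2}$, contradicting the choice of $t^\ast$. Otherwise $|C(t^\ast-1)| \le \tfrac{n^2}{2} - \tfrac{n}{2}$, and then $|C(t^\ast)| - |C(t^\ast-1)| \ge n > k$, contradicting Lemma~\ref{lem:BGGK1}. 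Either way we reach a contradiction, so $\lfloor \tfrac{n}{2}\rfloor$ lions cannot clear $R_n$.

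I do not expect a genuine obstacle in this last step: the substantive work was transferring the fall-down monotonicity to $R_n$ (Lemma~\ref{lem:FDT} and its corollary), which is what makes Lemma~\ref{lem:BGGK5} available. The only points requiring care are bookkeeping ones — confirming $t^\ast$ is well-defined with $t^\ast \ge 1$, matching the strict inequalities of the forbidden window to the hypothesis of Lemma~\ref{lem:BGGK5}, and verifying that the window width $n$ strictly exceeds $k$ (and that $n \ge 2k$) so that both cases genuinely close.
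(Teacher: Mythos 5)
Your proof is correct and takes essentially the same route as the paper: a contradiction via the ``forbidden window'' around $\tfrac{n^2}{2}$, combining Lemma~\ref{lem:BGGK1} (bounded growth), Lemma~\ref{lem:BGGK5} (at least $n$ boundary vertices in the window), and Lemma~\ref{lem:BGGK2} (no growth when $|\partial C(t)|\ge 2k$). Your first-crossing-time case analysis is in fact a slightly more careful bookkeeping of the same argument the paper gives.
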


\begin{proof}
Suppose for a contradiction that $k \le \lfloor\frac{n}{2}\rfloor$ lions can clear $R_n$.
The lions will eventually have to clear all $n^2$ vertices.
By Lemma~\ref{lem:BGGK1}, we know that $|C(t+1)| - |C(t)| \le k \le \frac{n}{2}$ for all times $t$.
Thus there must be a time $t$ where $\frac{n^2}{2} - \frac{n}{4} \le |C(t)| \le \frac{n^2}{2} + \frac{n}{4}$ and $|C(t+1) > |C(t)|$.
But by Lemma~\ref{lem:BGGK5}, there are at least $n\ge 2k$ boundary vertices of $C(t)$ at time $t$, and so Lemma~\ref{lem:BGGK2} tells us that $|C(t+1)| \le |C(t)|$, which is a contradiction.
Thus $k \le \lfloor\frac{n}{2}\rfloor$ lions do not suffice to clear $R_n$.
\end{proof}

\subsection{Conjectured insufficiency of $\frac{n}{2\sqrt{2}}$ lions on a triangle}

In this section we consider the triangular grid graph $P_n$ with $n$ vertices on each of its three sides.
We conjecture that $\lfloor\frac{n}{2\sqrt{2}}\rfloor$ lions are not capable of clearing the triangular graph $P_n$ --- though we give only an incomplete possible attempted proof strategy/outline.
By contrast, the square graphs $S_n$ and $R_n$ with $n$ vertices per each of their four sides require at least $n/2$ lions to clear.
We ask in Question~7 in Section~\ref{sec:conclusion} how many lions are necessary to clear the triangular graph $P_n$ or the square graphs $S_n$ and $R_n$.

We use the notation $\alpha\approx\beta$ to mean that there is some small constant $\kappa$ such that $|\alpha-\beta| \le \kappa$.

\begin{conjecture}\label{conj:Pn}
It is not possible to clear $P_n$ with fewer than
$\approx\lfloor\frac{n}{2\sqrt{2}}\rfloor$ lions.
\end{conjecture}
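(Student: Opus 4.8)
The plan is to mirror, as closely as possible, the argument used in Theorem~\ref{thm:Dn} for $R_n$, replacing the square-grid machinery with an analogous ``fall-down'' machinery adapted to the triangular geometry of $P_n$. The skeleton is fixed by Lemmas~\ref{lem:BGGK1} and~\ref{lem:BGGK2}: if $k$ lions clear $P_n$, then since the cleared set must grow from roughly $T_n/2$ vertices past $T_n/2$ at some step, and each step adds at most $k$ vertices, there is a time $t$ at which $|C(t)|$ lies within $k/2$ of $T_n/2$ while $|C(t+1)|>|C(t)|$. To contradict Lemma~\ref{lem:BGGK2} we need a ``discrete isoperimetric'' statement for $P_n$: \emph{every vertex subset $S$ of $P_n$ with $|S|\approx T_n/2$ has $|\partial S|\ge c\sqrt{n}$ (roughly)}. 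Combining this with $2k\le 2\cdot\frac{n}{2\sqrt2}=\frac{n}{\sqrt2}$ and noting $|\partial C(t)|\ge cn$ for the relevant $c$ would force $|C(t+1)|\le|C(t)|$, the contradiction. So the real content is the isoperimetric inequality, and the target constant $\tfrac{n}{2\sqrt2}$ is exactly what a bisecting cut of the equilateral triangle $P_n$ should cost: the shortest ``curve'' splitting an equilateral triangle of side $\ell$ into two equal-area halves is a line segment of length $\tfrac{\ell}{\sqrt2}$ (a segment parallel to the angle bisector, cutting off a smaller similar triangle of half the area, which has side $\ell/\sqrt2$), and translating $\ell\approx n$ and halving gives the bound on $k$.

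The concrete steps I would carry out are: (1) Set up a ``fall-down transformation'' $T$ on $P_n$ adapted to the triangular lattice --- for instance, push each subset as far as possible toward one vertex of the triangle along one family of lattice lines, then compress toward a side along a second family, analogously to the two-step down-then-left procedure on $S_n$. (2) Prove $T$ is monotone: applying $T$ does not increase $|\partial S|$. This is the analogue of Lemma~\ref{lem:BGGK4}/the corollary for $R_n$, and would be proved by the same kind of local case analysis on a generic vertex and its neighbors, being careful (as the excerpt warns for $R_n$) that the direction choices are compatible with the diagonal edges of the triangular grid. (3) Show that after applying $T$, a set of size $\approx T_n/2$ is forced into a ``staircase'' or near-triangular shape whose boundary can be counted directly, and that this minimal boundary is $\approx \tfrac{n}{\sqrt2}$ (up to an additive constant absorbed by $\approx$). (4) Feed the resulting isoperimetric bound into the Lemma~\ref{lem:BGGK1}/\ref{lem:BGGK2} argument exactly as in the proof of Theorem~\ref{thm:Dn}, with $n$ replaced by $\tfrac{n}{\sqrt2}$ and $n^2$ by $T_n$.

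The main obstacle --- and, I expect, the reason the authors state this only as a conjecture with an ``incomplete possible attempted proof strategy'' --- is step (2) together with step (3): establishing that \emph{some} fall-down transformation on $P_n$ is genuinely monotone, and that its fixed configurations of size $\approx T_n/2$ really do minimize the boundary. Unlike the square case, the triangular domain has three symmetry directions rather than two orthogonal ones, so a naive ``push toward a corner, then compress toward a side'' procedure may create new boundary vertices along the slanted side of the triangle, and the counterexample phenomenon flagged in Figure~\ref{fig:fdt_fail} for $R_n$ is a warning that the wrong directional choice breaks monotonicity. One would have to either find the correct pair of lattice directions for which monotonicity holds, or replace the compression argument by a direct combinatorial isoperimetric inequality on the triangular grid (e.g.\ via a vertex-isoperimetric or edge-isoperimetric estimate for the triangular lattice restricted to $P_n$). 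A secondary, more technical annoyance is that the extremal shape for bisecting $P_n$ is a smaller similar triangle with side $n/\sqrt2$, which is not a sublattice of $P_n$ for general $n$ (since $\sqrt2$ is irrational), so the ``$\approx$'' in the statement is doing real work and the discrete optimum must be shown to be within an additive constant of the continuous optimum --- exactly the kind of rounding estimate that is routine in principle but fiddly to make rigorous, and which I would leave to a careful write-up rather than attempt here.
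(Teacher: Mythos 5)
Your outline is essentially the paper's own (explicitly incomplete) treatment: it too reduces Conjecture~\ref{conj:Pn} to a discrete isoperimetric statement for $P_n$ --- any $C$ with $|C|\approx T_{\lfloor\sqrt{T_n}\rfloor}\approx T_n/2$ must have $|\partial C|\gtrsim n/\sqrt{2}$, formalized there as Conjectures~\ref{con:pw_iso} and~\ref{con:bounds} via row packings and ``ice cream cone'' packings rather than via the compression/fall-down map you propose --- and then runs exactly the Lemma~\ref{lem:BGGK1}/Lemma~\ref{lem:BGGK2} argument you describe, so the ingredient you flag as missing (a monotone fall-down transformation on $P_n$, or any other proof of the isoperimetric bound) is precisely what the paper leaves open. (One slip: your stated isoperimetric target ``$|\partial S|\ge c\sqrt{n}$'' should read $\ge cn$, i.e.\ $\approx n/\sqrt{2}$, as you in fact use a few lines later.)
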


The bound in this first conjecture is derived from later conjectures, which would provide a kind of ``isoperimetric inequality'' for sets $C$ in $P_n$.
We will show how Conjecture~\ref{conj:Pn} would be a consequence of the isoperimetric inequalities in the following Conjectures~\ref{con:pw_iso} and~\ref{con:bounds}.
We must first establish some new notation.

There are a number of ways that vertices in a set $C$ can be packed into the graph $P_n$ to have relatively few boundary vertices.
For the following definitions, we orient $P_n$ as drawn in Figures~\ref{fig:row} and~\ref{fig:icecream}.
A \textit{row packing} occurs when the vertices of $C$ fall so that each row of $P_n$ is completely filled before the row above it contains any vertices in $C$, and within each row the vertices are filled left to right.
The orientation for this packing is shown in Figure~\ref{fig:row}.
We define an \textit{ice cream come packing} to be a packing where the diagonal rows beginning in the lower left corner are filled one at a time, with the lowest vertex in each diagonal being filled first, and the next diagonal row cannot contain vertices in $C$ unless the diagonal row below it is already filled.
The orientation for this packing is shown in Figure~\ref{fig:icecream}.

\begin{figure}
    \centering
    \includegraphics[width = .3\textwidth]{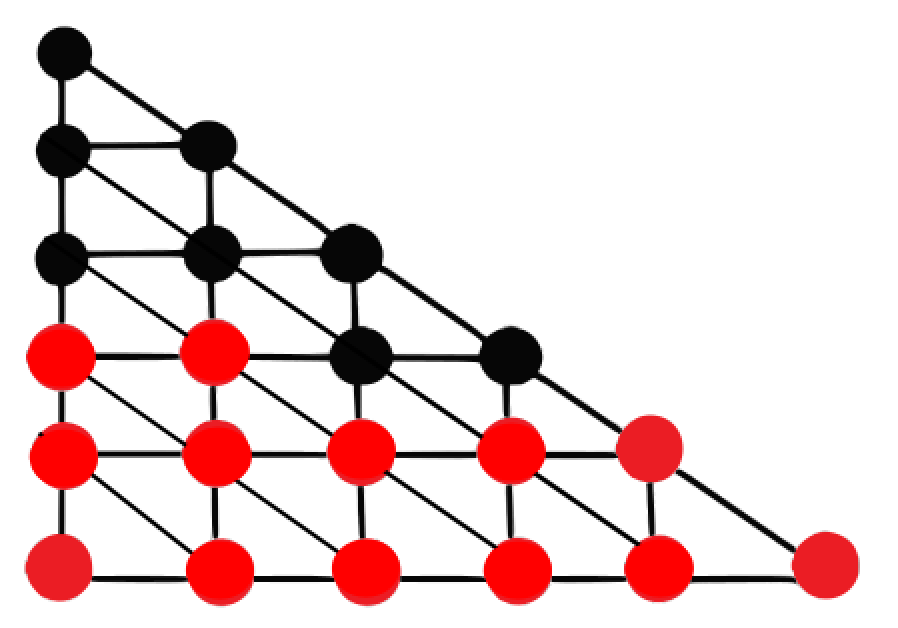}
    \caption{A row packing on $P_6$ with 13 vertices.}
    \label{fig:row}
\end{figure}

\begin{figure}
    \centering
    \includegraphics[width = .3\textwidth]{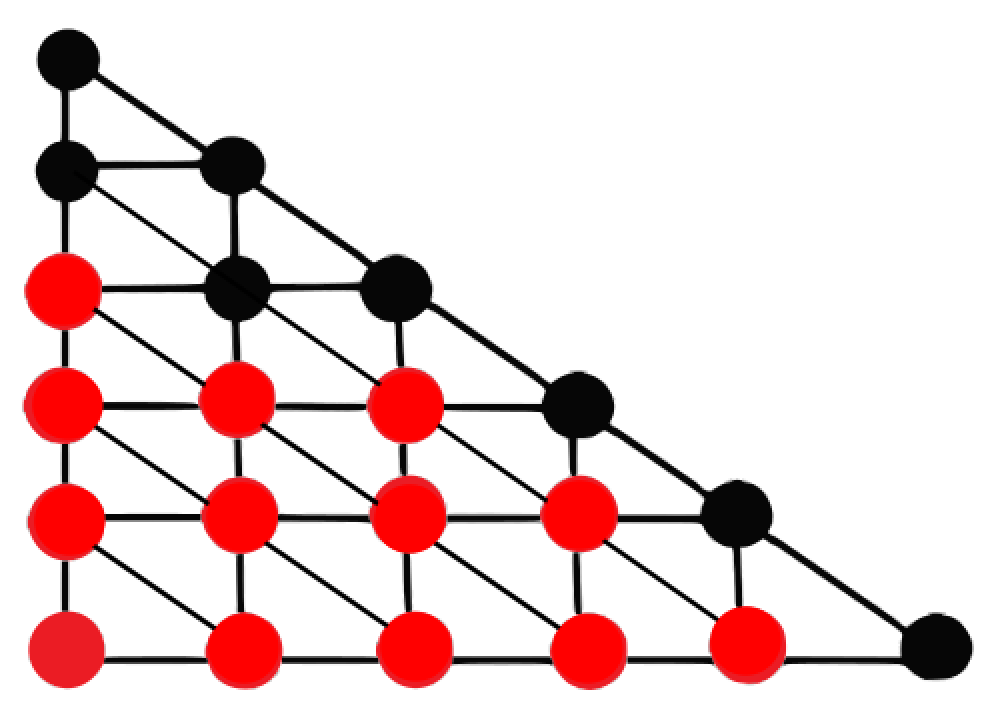}
    \caption{An ice cream cone packing on $P_6$ with 13 vertices.}
    \label{fig:icecream}
\end{figure}

The following two conjectures, if true, would give a type of isoperimetric inequality in the graph $P_n$.
The next conjecture describes what we believe are the optimal ways to place vertices in $P_n$ while minimizing the number of boundary vertices.

\begin{conjecture}
\label{con:pw_iso}
If $C$ is a set of vertices in $P_n$, then $|\partial C|$ is at least as large as the minimum of 
\begin{itemize}
\item the number of boundary vertices in a row packing with $|C|$ vertices, and
\item the number of boundary vertices in an ice cream cone packing with $|C|$ vertices.
\end{itemize}
\end{conjecture}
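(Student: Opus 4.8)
The plan is to prove a discrete isoperimetric inequality on $P_n$ by a compression/symmetrization argument in the spirit of the fall-down transformation used for $R_n$ in~\cite{BGGK}. The key idea is to define a ``gravity'' transformation on subsets of $P_n$ that pushes the vertices of $C$ toward a corner while never increasing the number of boundary vertices, and to show that the only stable configurations under this transformation (together with a second ``straightening'' move) are exactly the row packings and the ice cream cone packings. Since the transformation preserves $|C|$ and is monotone on $|\partial C|$, the minimum of $|\partial C|$ over all sets of a fixed size is attained at one of these two stable shapes, which is precisely what the conjecture asserts.

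Here is the order I would carry the steps out. First I would fix the orientation of $P_n$ as in Figure~\ref{fig:row} and set up coordinates on the vertex set: label vertices by $(i,j)$ where $i$ is the row (counted from the bottom) and $j$ is the position within the row. I would then define a first compression move $T_{\downarrow}$ that, column by column (using the ``downward'' diagonal direction dictated by the triangulation of $P_n$), slides the vertices of $C$ in that column as far down as they will go, keeping the count in each column fixed; and a second move $T_{\leftarrow}$ that within each row slides vertices as far left as possible. Next I would prove the analogue of Lemma~\ref{lem:BGGK4}: each of $T_{\downarrow}$ and $T_{\leftarrow}$ is monotone, i.e.\ does not increase $|\partial C|$. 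As in~\cite{BGGK} this is a local case analysis: for a vertex $v\in T(C)$ that is a boundary vertex after compression, one exhibits an ``ancestor'' boundary vertex of $C$ before compression, and one checks the map $v\mapsto$ ancestor is injective. The triangular grid has each vertex adjacent to up to six neighbors (two in each of the three grid directions), so the case analysis is longer than in $S_n$, but the structure — a missing neighbor in some direction forces a missing neighbor of an earlier vertex in the compressed direction — is the same. Then I would show that any subset of $P_n$ fixed by both $T_{\downarrow}$ and $T_{\leftarrow}$ is either a row packing or an ice cream cone packing (or, for certain sizes, a hybrid that has at least as many boundary vertices as one of the two); intuitively a set that is both ``column-down-packed'' and ``row-left-packed'' is a Young-diagram-like staircase against the corner, and on the triangular geometry of $P_n$ the extremal such staircases are exactly these two families. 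Finally, combining monotonicity with the classification: starting from an arbitrary $C$, iterate $T_{\downarrow}$ and $T_{\leftarrow}$ until the configuration stabilizes (this terminates since a suitable potential — e.g.\ sum of row-indices plus sum of within-row-indices — strictly decreases until stable); the stable set has $|\partial C|$ no larger than the original and is a row or ice cream cone packing, giving the claimed bound.

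The main obstacle I expect is the monotonicity of the compression moves on $P_n$, and the correct choice of \emph{which} diagonal direction to compress along. The $R_n$ story already shows (Figure~\ref{fig:fdt_fail}, Lemma~\ref{lem:FDT}) that the wrong choice of left-vs-right breaks monotonicity once diagonal edges are present; on $P_n$ there are three grid directions rather than two, so one must check that ``down'' and ``left'' are mutually compatible and that the second move $T_{\leftarrow}$ does not undo the good structure created by $T_{\downarrow}$ (this is the subtle point — one may need to argue that $T_{\downarrow}$ followed by $T_{\leftarrow}$ is itself monotone and idempotent-up-to-stabilization, rather than treating the moves independently). A secondary difficulty is the classification step: showing rigorously that the doubly-stable sets are exactly the two packings (or are beaten by them) requires understanding how the boundary count of a ``staircase against a $60^\circ$ corner'' compares between the row and ice cream cone shapes, which is presumably where the constant $\frac{1}{2\sqrt2}$ in Conjecture~\ref{conj:Pn} ultimately comes from. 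If a fully general monotone compression cannot be found, a fallback is to prove the inequality only asymptotically (with an additive error), which already suffices for Conjecture~\ref{conj:Pn} given the $\approx$ in its statement.
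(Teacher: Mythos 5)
You should first be aware that the paper does not prove this statement: it is explicitly left as a conjecture (Conjecture~\ref{con:pw_iso}), used only as a hypothesis from which Conjectures~\ref{con:bounds} and~\ref{conj:Pn} would follow. So there is no proof in the paper to compare against, and your proposal should be judged on whether it closes the gap on its own. It does not; it is a plausible strategy outline with two essential steps missing, both of which you partly acknowledge.

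The first gap is the monotonicity of the compression moves $T_{\downarrow}$ and $T_{\leftarrow}$ on $P_n$. You assert that the local case analysis ``is the same'' as for $S_n$, but the paper's own treatment of $R_n$ (Lemma~\ref{lem:FDT} and Figure~\ref{fig:fdt_fail}) shows that once diagonal edges are present, monotonicity is sensitive to the choice of compression direction and genuinely fails for the wrong choice. On $P_n$ each interior vertex has six neighbours in three grid directions, and no argument is given that there exist two compression directions that are each monotone and remain monotone when alternated; this is the hard technical core of any compression proof and is left entirely open. The second gap is the classification step. A set fixed by both compressions is, at best, a general staircase against the corner (left-justified rows of weakly decreasing length), and row packings and ice cream cone packings are only the two extreme members of that family. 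Your parenthetical ``or, for certain sizes, a hybrid that has at least as many boundary vertices as one of the two'' is carrying all the remaining weight: proving that every stable staircase has at least as many boundary vertices as the better of the two packings of the same cardinality is essentially the conjecture itself restricted to staircases, and you give no argument for it. So even granting monotone compression, the proposal reduces the conjecture to an unproven comparison rather than resolving it. The plan is a reasonable research program (and the natural one, given~\cite{BGGK}), but as written it is not a proof.
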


\begin{conjecture}
\label{con:bounds}
If $C$ is a set of vertices in $P_n$ with $|C|\approx T_{\lfloor\sqrt{T_n}\rfloor}$, 
then there are at least $\approx \lfloor \frac{n}{\sqrt{2}} \rfloor$ boundary vertices in $C$.

\end{conjecture}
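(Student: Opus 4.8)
The plan is to deduce Conjecture~\ref{con:bounds} from Conjecture~\ref{con:pw_iso} by directly estimating the number of boundary vertices of the two extremal packings at the relevant size. Write $m=\lfloor\sqrt{T_n}\rfloor$, so that $|C|\approx T_m$. Since $T_n=\tfrac{n(n+1)}{2}$, we have $m\approx\tfrac{n}{\sqrt 2}$ (the two differ by a bounded amount), hence $T_m\approx\tfrac12 m^2\approx\tfrac{n^2}{4}\approx\tfrac12 T_n$; so $|C|$ is roughly half the vertices of $P_n$. By Conjecture~\ref{con:pw_iso}, $|\partial C|$ is at least the smaller of the boundary counts of a row packing and of an ice cream cone packing, each with $\approx T_m$ vertices, so it suffices to show that \emph{both} of these counts are $\ge\approx\lfloor\tfrac{n}{\sqrt 2}\rfloor$.

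For the ice cream cone packing, first I would observe that filling the $k$ diagonals of $P_n$ nearest the lower-left corner produces a subgraph isomorphic to $P_k$, with $T_k$ vertices, whose boundary \emph{inside $P_n$} is exactly its outermost diagonal: those $k$ vertices each have an uncovered neighbor on the next diagonal out, while every earlier diagonal is completely surrounded by covered vertices. Adding a partial diagonal of $r$ vertices creates $r$ new boundary vertices but makes about $r$ of the previous diagonal interior, so the boundary count stays $\approx k$ throughout the filling. Since $T_k\approx T_m$ forces $k\approx m\approx\lfloor\tfrac{n}{\sqrt 2}\rfloor$, this packing has $\approx\lfloor\tfrac{n}{\sqrt 2}\rfloor$ boundary vertices.

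For the row packing the geometry is the same but the arithmetic runs the other way. If $k$ complete rows are filled from the base of $P_n$ (rows of lengths $n,n-1,\dots,n-k+1$), possibly with a partial row above, then the filled region is a right-trapezoid capped by the partial row, and a short case check shows its boundary is the partial top row together with those vertices of the highest complete row whose upward neighbors are not all covered --- in total about $n-k$ vertices, every lower row being entirely interior. The size constraint $\sum_{j=0}^{k-1}(n-j)\approx T_m\approx\tfrac{n^2}{4}$ solves to $k\approx n\bigl(1-\tfrac{1}{\sqrt 2}\bigr)$, whence $n-k\approx\tfrac{n}{\sqrt 2}\approx m$. Thus the minimum of the two packing boundaries is again $\approx\lfloor\tfrac{n}{\sqrt 2}\rfloor$, and Conjecture~\ref{con:pw_iso} completes the deduction.

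The genuine obstacle is Conjecture~\ref{con:pw_iso} itself --- the isoperimetric claim that these two packings are optimal --- which we are assuming and which the paper leaves open. Granting it, what remains is bookkeeping: tracking boundary vertices across the partial row or partial diagonal and near the three corners of $P_n$; confirming that neither packing's boundary count dips below $\approx\tfrac{n}{\sqrt 2}$ anywhere in the window $|C|\approx T_m$ (each count is a step function of $|C|$ that changes by only $1$ when a row or diagonal of length $\approx\tfrac{n}{\sqrt 2}$ is completed, so it is safe for any fixed tolerance $\kappa$); and checking that the nested estimates $m\approx\sqrt{T_n}$, $T_m\approx\tfrac12 T_n$, and the root $k\approx n(1-\tfrac1{\sqrt 2})$ of the quadratic all stay within an absolute constant, so the conclusion holds in the sense of $\approx$.
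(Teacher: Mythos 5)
Your argument is, like the paper's own sketch, conditional on Conjecture~\ref{con:pw_iso}, and it follows essentially the same route: reduce to the two extremal packings and check that each has $\approx\lfloor n/\sqrt{2}\rfloor$ boundary vertices when $|C|\approx T_{\lfloor\sqrt{T_n}\rfloor}\approx\tfrac{1}{2}T_n$. The only cosmetic difference is that you evaluate both boundary counts directly at that cardinality, whereas the paper invokes the monotonicity of the two counts and locates the crossover $T_j\approx T_n-T_j$; the arithmetic and the conclusion are the same.
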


\textbf{\emph{Possible proof sketch of Conjecture~\ref{con:bounds}, using Conjecture~\ref{con:pw_iso}}}

As we add vertices to an ice cream cone packing, the number of boundary vertices is nondecreasing (while the last diagonal row is still empty).
For $j\le n-1$, the number of boundary vertices in an ice cream cone packing of $\approx T_j$ vertices is $\approx j$.
By contrast, in a row packing, after the bottom row is full, the number of boundary vertices is nonincreasing as we add more vertices.
For $j\le n-1$, the number of boundary vertices in a row packing of $\approx T_n-T_j$ vertices is $\approx j$.
Due to these monotonicity properties, Conjecture~\ref{con:pw_iso} would imply that when the number of vertices we place is $T_j\approx T_n-T_j$, then we will have at least $\approx j$ boundary vertices.
We solve for $j$ in the equation $T_j\approx T_n-T_j$ to obtain $2T_j \approx T_n$, i.e.\ $j(j+1) \approx T_n$, giving $j^2 + j - T_n \approx 0$.
We solve to find $j \approx \frac{-1 \pm \sqrt{1 + 4T_n}}{2} \approx \frac{\sqrt{4T_n}}{2}=\sqrt{T_n}$.
Hence if $C$ is a set of vertices in $P_n$ with $|C|\approx T_{\lfloor\sqrt{T_n}\rfloor}$, 
then there are at least $j \approx \sqrt{T_n} \ge \lfloor \frac{n}{\sqrt{2}} \rfloor$ boundary vertices in $C$, where in the last inequality we have used the bound $T_n=\frac{n(n+1)}{2} \ge \frac{n^2}{2}$.
\vskip3mm

We can now show how Conjecture~\ref{conj:Pn} would follow from the above two (unproven) conjectures.

\textbf{\emph{Possible proof of Conjecture~\ref{conj:Pn} from Conjectures~\ref{con:pw_iso} and~\ref{con:bounds}}}
Suppose for a contradiction that $k$ is at most $\approx\lfloor\frac{n}{2\sqrt{2}}\rfloor$, and that $k$ lions can clear $P_n$.
Thus at the final time step, there must be $T_n$ cleared vertices.
Lemma \ref{lem:BGGK1} tells us that $|C(t+1)| - |C(t)| \le k$ for all times $t$.
There must have been a time $t$ such that the number of vertices in $C(t)$ was approximately $T_{\lfloor\sqrt{T_n}\rfloor}$ and $|C(t+1)| > |C(t)|$.
At that same time, the number of boundary vertices $|\partial C(t)|$ would have to be at least $\approx\lfloor\frac{n}{\sqrt{2}}\rfloor$ by Conjecture~\ref{con:bounds}.
By Lemma~\ref{lem:BGGK2}, since $|\partial C(t)|$ is at least $2k$, the number of boundary vertices cannot increase in the next time step, so in fact $|C(t+1)| \le |C(t)|$.
This is a contradiction, and thus fewer than $\approx \lfloor\tfrac{n}{2\sqrt{2}}\rfloor$ lions cannot clear $P_n$.

\section{Connection to Cheeger constant}\label{sec:cheeger}

In graph theory, the term \emph{Cheeger constant} refers a  numerical measure of how much of a bottleneck a graph has.
The term arises from a related quantity, also known as a Cheeger constant, that is used in differential geometry: the Cheeger constant of a Riemannian manifold depends on the minimal area of a hypersurface that is required to divide the manifold into two pieces~\cite{cheeger1969lower}.
For both graphs and manifolds, the Cheeger constant can be used to provide lower bounds on the eigenvalues of the Laplacian (of the graph or of the manifold).
In the graph theory literature, there are several different quantities known as the Cheeger constant, and they are each defined slightly differently.
For some more background and applications of Cheeger constants to graphs, see~\cite[Chapter~2]{Chung_1997}.
In this section, we show a relationship between the Cheeger constant of a graph and the number of lions needed to clear this graph of contamination.


\begin{definition}
For a graph $G$, let the Cheeger constant be
\[
    g := \min \left\{\frac{|\partial S|}{\min \{|S|,|\overline{S}| \}} : S \subseteq V(G),\ S \neq \emptyset,\ S \neq V(G) \right\},
\]
where $\partial S = \{v\in S~|~uv\in E(G),\ u\notin S\}$ (i.e.\ the set of boundary vertices), and $\overline{S}$ is the set $V(G) \setminus S$.
\end{definition}
This is the same constant that is studied in~\cite{bobkov2000lambda}, where it is called the \emph{vertex isoperimetric number} $h_{in}$.
Note that $\partial S$ is defined differently than the boundary used in~\cite{Chung_1997}
(for us, $\partial S$ is a subset of $S$ instead of $\overline{S}$). 
Other definitions of the Cheeger constant of a graph may count the number of boundary edges instead of boundary vertices.

For a connected graph, we have $0 < g \le 1$.
For the upper bound, consider a connected graph $G$ with more than one vertex.
If $|S| = 1$, then $\min\{|S|, |\overline S|\} = 1$ and since $G$ is connected, $|\partial S| = 1$.
Thus any connected graph has a Cheeger constant at most 1 since $g$ takes the minimum of all vertex subsets.
For the lower bound, note that if $S$ is neither empty nor all of $V(G)$, then $|\partial S|\ge 1$ when $G$ is connected.
If $G$ is disconnected then $g = 0$ since $|\partial S| = 0$ if we take $S$ to be a connected component.
Roughly speaking, a larger $g$ tells us that overall the graph has more connections; a smaller $g$ says that the graph is easier to break into pieces.
Consider the graphs in Figure~\ref{fig:cheeger_graphs} for a few examples of Cheeger constants.

\begin{figure}
    \centering
    \includegraphics[width = .6\textwidth]{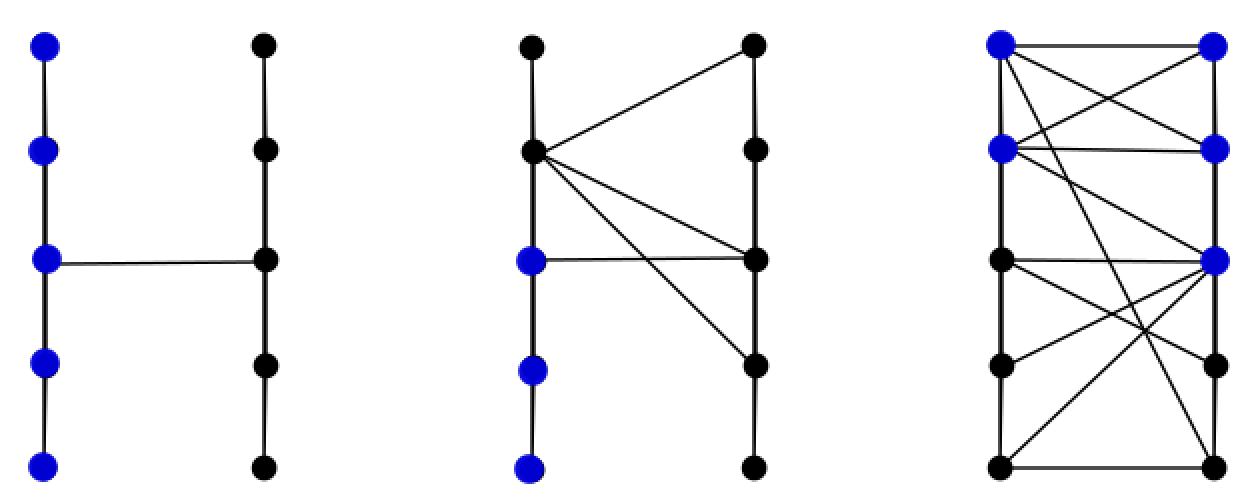}
    \caption{From left to right, the Cheeger constants are $g = \frac{1}{5}, g = \frac{1}{3}, g = \frac{2}{5}$.
    These Cheeger constants can be realized by taking the blue vertices to be the set $S$.}
    \label{fig:cheeger_graphs}
\end{figure}

We now give lower bounds on the number of lions needed to clear a graph in terms of the Cheeger constant.
Recall a graph with $\emph{polite lions}$ is one in which at each turn, at most one lion can move.
We first consider polite lions, before moving to the case of arbitrary lions.

\begin{theorem}
\label{thm:cheeger-polite}
Let $G$ be a connected graph with vertex set $V$ and with Cheeger constant $g$.
If $k \le \frac{1}{2}\lfloor\frac{|V|}{2}\rfloor g$, then $G$ cannot be cleared by $k$ polite lions.
\end{theorem}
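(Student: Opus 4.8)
The plan is to prove the statement directly, by induction on the time step $t$: I will show that the set $C(t)$ of cleared vertices always satisfies $|C(t)| \le m$, where $m := \lfloor |V|/2\rfloor$. Since $G$ is connected and has a well-defined Cheeger constant we have $|V| \ge 2$ and hence $m < |V|$, so this bound immediately rules out ever reaching $C(t) = V$; that is, $k$ polite lions cannot clear $G$.

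Two ingredients feed the induction. First, at time $0$ the cleared set is exactly the set of lion-occupied vertices, so $|C(0)| \le k$, and the hypothesis combined with the bound $g \le 1$ valid for connected graphs gives $k \le \tfrac12\lfloor|V|/2\rfloor g \le \tfrac12 m \le m$; this is the base case. Second, the polite-lion analogue of Lemma~\ref{lem:BGGK1}: since only one lion moves per turn, any vertex cleared at time $t+1$ is either already in $C(t)$ or is the destination of that single moving lion, so $|C(t+1)| \le |C(t)| + 1$. For the inductive step, assume $|C(t)| \le m$. If $|C(t)| < m$, the second ingredient gives $|C(t+1)| \le m$. If $|C(t)| = m$, then $C(t)$ is a proper nonempty subset with $|\overline{C(t)}| = |V| - m \ge m$, so $\min\{|C(t)|, |\overline{C(t)}|\} = m$; the definition of the Cheeger constant then yields $|\partial C(t)| \ge g\,m$, which is $\ge 2k$ precisely by the hypothesis $k \le \tfrac12\lfloor|V|/2\rfloor g$, and Lemma~\ref{lem:BGGK2} forces $|C(t+1)| \le |C(t)| = m$. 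Either way $|C(t+1)| \le m$, closing the induction.

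I expect this argument to be essentially routine once set up; the only places needing care are the arithmetic translating the hypothesis into the threshold $g\,m \ge 2k$ needed to invoke Lemma~\ref{lem:BGGK2}, and the small edge-case bookkeeping (that $|V|\ge 2$, so $m \ge 1$ and $C(t)$ is a genuine proper nonempty subset exactly when $|C(t)| = m$, which is what the Cheeger constant and Lemma~\ref{lem:BGGK2} require). It is worth noting that the politeness hypothesis enters only through the ``$|C(t+1)| \le |C(t)| + 1$'' step; this is what lets us trap $|C(t)|$ at level $m$ without any intermediate-value argument, and it is also the point at which the general (non-polite) case requires a genuinely different treatment.
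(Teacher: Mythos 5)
Your proof is correct and is essentially the same argument as the paper's: both use the polite-lion increment bound $|C(t+1)|\le|C(t)|+1$ to trap the cleared count at the level $m=\lfloor|V|/2\rfloor$, where the Cheeger constant gives $|\partial C(t)|\ge gm\ge 2k$ and Lemma~\ref{lem:BGGK2} blocks any further increase. Your explicit induction is a slightly cleaner packaging than the paper's intermediate-value contradiction, and your observation that $\min\{|C(t)|,|\overline{C(t)}|\}=m$ exactly when $|C(t)|=m$ is stated more carefully than in the paper, but the substance is identical.
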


\begin{proof}
Suppose for a contradiction that $k \le \frac{1}{2}\lfloor\frac{|V|}{2}\rfloor g$ polite lions can clear $G$.
Since the number of cleared vertices can increase by at most one in each step with polite lions, in the process of clearing there must be a time $t$ satisfying $|C(t)| = \lfloor\frac{|V|}{2}\rfloor$.
Now, by the definition of the Cheeger constant $g$ and the observation that $\min\{|C(t)|,|\overline{C(t)}|\}\le \lfloor\frac{|V|}{2}\rfloor$, we have that $\lfloor\frac{|V|}{2}\rfloor g \le |\partial C(t)|$.
Since $2k\le |\partial C(t)|$, this implies by Lemma~\ref{lem:BGGK2} that $|C(t+1)| \leq |C(t)|$.
Therefore the number of cleared vertices is always at most $\lfloor\frac{|V|}{2}\rfloor$, contradicting the clearing of $G$ with $k$ lions.
\end{proof}

\begin{theorem}
\label{thm:cheeger}
Let $G$ be a connected graph with Cheeger constant $g$.
If $k \le \frac{g|V|}{4+g}$, then $G$ cannot be cleared by $k$ lions.
\end{theorem}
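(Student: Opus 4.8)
The plan is to mimic the argument for Theorem~\ref{thm:cheeger-polite}, but to account for the fact that with unrestricted lions the cleared set can grow by as much as $k$ vertices in a single step (Lemma~\ref{lem:BGGK1}) rather than by just one. So instead of guaranteeing that $|C(t)|$ takes the value $\lfloor |V|/2\rfloor$ exactly, I will isolate a single time step at which the cleared set both strictly increases and has size in a suitable ``middle band,'' and then derive a contradiction from the Cheeger bound together with Lemma~\ref{lem:BGGK2}.

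Concretely, suppose for contradiction that $k \le \frac{g|V|}{4+g}$ lions clear $G$. Set $L := \frac{2k}{g}$ and $U := |V| - \frac{2k}{g}$. First I record three facts: (i) $|C(0)| \le k < L$, since at time $0$ the cleared vertices are exactly the (at most $k$) vertices occupied by lions and $g \le 1$; (ii) at the final time $|C| = |V| > U$; and (iii) $L < U$ strictly, because $k \le \frac{g|V|}{4+g}$ gives $4k \le \frac{4g|V|}{4+g} < g|V|$, i.e.\ $\frac{4k}{g} < |V|$. (The trivial case $k=0$ is handled separately: with no lions a nonempty graph is never cleared.) Now let $t$ be the largest time index with $|C(t)| < U$; by (i) and (ii) this is well-defined and is not the final time, so by maximality $|C(t+1)| \ge U$, and in particular $|C(t+1)| \ge U > |C(t)|$, i.e.\ the cleared set strictly increases at step $t$.

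The crux is to show $|C(t)| \ge L$. If instead $|C(t)| < L$, then by Lemma~\ref{lem:BGGK1} we would have $U \le |C(t+1)| \le |C(t)| + k < L + k = \frac{2k}{g} + k$, hence $|V| = U + \frac{2k}{g} < \frac{4k}{g} + k = \frac{k(4+g)}{g}$, contradicting $k \le \frac{g|V|}{4+g}$. Therefore $L \le |C(t)| < U$, so $C(t)$ is a proper nonempty subset of $V$ with $\min\{|C(t)|, |\overline{C(t)}|\} \ge \min\{L, |V|-U\} = \frac{2k}{g}$. The definition of the Cheeger constant then yields $|\partial C(t)| \ge g\cdot \frac{2k}{g} = 2k$, so Lemma~\ref{lem:BGGK2} applies and gives $|C(t+1)| \le |C(t)|$, contradicting the strict increase above. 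Hence no such $k$ lions exist.

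I expect the main obstacle to be the bookkeeping rather than any deep idea: one must choose the band $[L, U)$ so that (a) the starting set lies below it while the fully cleared graph lies above it, (b) a single $k$-sized jump cannot leap over it — this is exactly what forces the denominator $4+g$ rather than the naive $4$ — and (c) throughout the band the Cheeger bound delivers at least $2k$ boundary vertices. Some care is also needed because the threshold $\frac{2k}{g}$ is generally not an integer, so the inequalities must be arranged so that the integer quantities $|C(t)|$ and $|\partial C(t)|$ still land on the correct side of it; taking the threshold to be exactly $\frac{2k}{g}$ (and using that $2k$ is an integer) makes this go through cleanly.
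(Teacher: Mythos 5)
Your proof is correct and follows essentially the same strategy as the paper's: use Lemma~\ref{lem:BGGK1} to locate a step at which the cleared set strictly increases while lying in a ``middle band'' where the Cheeger constant forces $|\partial C(t)|\ge 2k$, then contradict via Lemma~\ref{lem:BGGK2}. The only cosmetic difference is the choice of band --- the paper uses the width-$k$ window $\frac{|V|}{2}\pm\frac{k}{2}$ while you use $[\frac{2k}{g},\,|V|-\frac{2k}{g})$ --- and your explicit ``largest time below $U$'' bookkeeping is a clean way to make the crossing argument rigorous.
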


\begin{proof}
Suppose for a contradiction that $k \leq \frac{g|V|}{4+g}$ lions can clear $G$.
We have that $k(2+\frac{g}{2}) \leq \frac{g|V|}{2}$, and hence $2k \leq g(\frac{|V|}{2} - \frac{k}{2})$.
This implies that for any $x$ satisfying $0 \leq x \leq \frac{k}{2}$, we have $2k \leq g(\frac{|V|}{2} - x).$
By definition of the Cheeger constant, $g \leq \frac{|\partial{S}|}{\frac{|V|}{2} - x}$, where $S$ is any subset of $V$ of size $\frac{|V|}{2} \pm x$.
Combining these two facts implies that for any $x$ satisfying $0 \leq x \leq \frac{k}{2}$, we have $2k \leq g(\frac{|V|}{2} - x) \leq  | \partial S|$,
where $S\subseteq V$ is any set of size $|S|=\frac{|V|}{2} \pm x$.
By Lemma~\ref{lem:BGGK1} the size of the cleared set can increase by at most $k$ in any step.
Therefore there is some time $t$ when the number of cleared vertices is within $\pm \frac{k}{2}$ of $\frac{|V|}{2}$ and when $|C(t+1)|>|C(t)|$.
But since $2k\le|\partial C(t)|$ at this time step $t$, Lemma~\ref{lem:BGGK2} then implies that the number of cleared vertices cannot increase in the next step, giving a contradiction.
Therefore $k$ lions do not suffice to clear the graph $G$.
\end{proof}

The advantage of using the Cheeger constant is that it gives a lower bound on the number of lions needed to clear an arbitrary graph.
That is not to say that the bound obtained by this method is near the optimal number of lions.
Rather, Theorems~\ref{thm:cheeger-polite} and~\ref{thm:cheeger} gives a weak bound for any graph, including graphs that do not have obvious symmetry that can be used to discover a better bound.

For example, suppose the Cheeger constant is $g=1$, which happens if $G$ is a complete graph.
If $g=1$, then Theorem~\ref{thm:cheeger} implies that more than $|V|/5$ lions are needed to clear the graph $G$, and Theorem~\ref{thm:cheeger-polite} implies that at least $|V|/4$ polite lions are needed.
If $g=\frac{1}{2}$, then Theorem~\ref{thm:cheeger} says that more than $|V|/9$ lions are needed, and Theorem~\ref{thm:cheeger-polite} implies that at least $|V|/8$ polite lions are needed.

\section{Conclusion and Open Questions}
\label{sec:conclusion}

The lion and contamination problem is a pursuit-evasion problem classically defined on square grid graphs.
We have explored extensions of this problem by considering restricted models of lion motion --- such as caffeinated and polite lions --- and by studying the case of triangular grid graphs.
We found an upper bound on the number of lions that that would suffice to clear a triangulated parallelogram graph under the typical lion motion and using caffeinated lions.
We also found a lower bound for the number of lions to clear a triangulated square graph using techniques from~\cite{BGGK}.
We explored a possible lower bound on the number of lions needed to clear a discretized triangle graph, and provided a sketch of a possible proof.
Lastly, we used the Cheeger constant to give a lower bound on the number of lions needed to clear contamination on an arbitrary connected graph using two types of lion motion: polite lions and arbitrary lions.
While we do not expect the Cheeger constant bound to typically be tight, particularly for graphs with extra structure or symmetry, it is quite general and applies to any connected graph.

We end with a collection of open questions, that we hope will inspire future work on the lions and contamination problem with other families of graphs.

\begin{question}{Question~1}
Let $G$ be a connected graph and let $H$ be a connected subgraph.
If $k$ lions can clear $G$, then can $k$ lions clear $H$?
Prove or find a counterexample.

This question is not a priori obvious even if $V(H)=V(G)$.
Nor is this question obvious if $H$ is an \emph{induced} subgraph of $G$, which means that $V(H)\subseteq V(G)$ and that two vertices in $H$ are connected by an edge in $H$ exactly when they are connected by an edge in $G$.
\end{question}

\begin{remark}
Note that the square grids are subgraphs of $R_n$ (with diagonals removed), so if the above question is answered in the affirmative, then our results might be able to be used to prove some results from~\cite{BGGK} --- see Theorem~\ref{thm:Dn}.
\end{remark}

\begin{question}{Question~2}
We define a sweep of a graph to be \emph{monotonic} if every vertex that is cleared never again becomes recontaminated.
If a graph $G$ admits a sweep by $k$ lions, then does it necessarily admit a monotonic sweep by $k$ lions?
And if not, then what is the smallest possible counterexample, both in terms of $k$ and in terms of the number of vertices in the graph?
For these questions, the lions are allowed to choose their starting positions.

See~\cite{lapaugh1993recontamination} for a different type of sensor motion (also in which contamination lives on the edges) in which the existence of a sweep implies the existence of a monotonic sweep.

\end{question}

\begin{question}{Question~3}
Consider the graphs $C(n,k)$ which have $n$ evenly-spaced vertices around a circle, and all edges of length at most $k$ steps around the circle.
For $k=0$, this is $n$ distinct points; for $k=1$ this is a circle graph; for $k=2$ a bunch of triangles form.
How many lions are needed to clear these graphs of contamination?
For $k=1$ it is clear that $2$ lions suffice.
For $k>1$ one can see that $2k$ lions suffice (note 4 lions are likely needed for $k=2$ and $n$ large).
What are the best lower bounds we can get on the number of lions needed?
\end{question}

\begin{question}{Question~4}
What can we say about strongly regular graphs, of type $(n,k,\lambda,\mu)$?
What bounds on  the number of lions needed can we give in terms of $n$, $k$, $\lambda$, $\mu$?
\end{question}

\begin{question}{Question~5}
Given a way to discretize a Euclidean domain into a graph, what can be said about the number of lions needed to clear the graph, perhaps as the number of vertices in discretization goes to infinity?
For domains in the plane one might expect the number of lions to scale in relationship with a length, and for domains in $\R^n$ one might expect the number of lions to scale in relationship to a $(n-1)$-dimensional volume.
In what settings are results along these lines true?
See Corollary~9.1 of~\cite{SensorSweeps} for related ideas.
What can be said about the relationship between the number of lions needed for different types of discretizations, say triangular versus square versus hexagonal triangulations of a planar domain, or different regular lattices in a three-dimensional domain, perhaps as the number of vertices goes to infinity?
\end{question}

\begin{question}{Question~6}
There are several related notions of Cheeger constants on graphs, all of which are different discretizations of the Cheeger constant of a manifold.
In Section~\ref{sec:cheeger} we consider a connection between the Cheeger constant that is sometimes referred to as the \emph{vertex isoperimetric number} $h_{in}$~\cite{bobkov2000lambda} and the number of polite lions needed.
In this Cheeger constant, the size of a boundary is counted using vertices.
A more commonly studied discretization, however, is the Cheeger constant $h_G$ of a graph~\cite{Chung_1997} where the size of a boundary is counted using edges.
Suppose we only considered \emph{monotonic} sweeps --- sweeps in which the number of cleared vertices is not allowed to ever decrease, as in Question~2 --- by caffeinated lions (lions which must move at every step).
Is the Cheeger constant $h_G$ relevant for bounding the number of lions (from below) needed for monotonic sweeps by caffeinated lions?

\end{question}

\begin{question}{Question~7}
How many lions are necessary to clear the triangular graph $P_n$ or the square graphs $S_n$ and $R_n$?
Are $n$ lions needed for all three of these graphs?
\end{question}


\bibliographystyle{plain}
\bibliography{LionsAndContamination.bib}

\end{document}